\def\url@leostyle{%
 \@ifundefined{selectfont}{\def\UrlFont{\sf}}{\def\UrlFont{\scriptsize\ttfamily}}} \makeatother\urlstyle{leo}
\newtheorem{theorem}{Theorem}
\newtheorem{proposition}[theorem]{Proposition}
\newtheorem{lemma}[theorem]{Lemma}
\newtheorem{corollary}[theorem]{Corollary}
\theoremstyle{definition}
\newtheorem{example}[theorem]{Example}
\theoremstyle{remark}
\newtheorem{remark}[theorem]{Remark}
\numberwithin{equation}{section}
\numberwithin{theorem}{section}
\definecolor{Red}{rgb}{0.9,0,0.0}
\definecolor{Blue}{rgb}{0,0.0,1.0}
\def\cA{\mathcal{A}}
\def\cB{\mathcal{B}}
\def\cC{\mathcal{C}}
\def\cF{\mathcal{F}}
\def\cM{\mathcal{M}}
\def\bC{\mathbb{C}}
\def\bE{\mathbb{E}}
\def\bH{\mathbb{H}}
\def\bN{\mathbb{N}}
\def\bP{\mathbb{P}}
\def\bQ{\mathbb{Q}}
\def\bR{\mathbb{R}}
\def\bT{\mathbb{T}}
\newcommand{\1}{\mathbf{1}}            
\newcommand{\set}[1]{\{#1\}}            
\renewcommand{\d}{\operatorname{d}\!}   
\DeclareMathOperator*{\argmin}{arg\,min} 
\DeclareMathOperator*{\argmax}{arg\,max} 
\def\namedlabel#1#2{\begingroup
    #2%
    \def\@currentlabel{#2}%
    \phantomsection\label{#1}\endgroup
}
\title{ 
 Long run risk sensitive portfolio with general factors}
\def\and{%
  \end{tabular}%
  \begin{tabular}[t]{c}}%
\def\@fnsymbol#1{\ensuremath{\ifcase#1\or a\or b\or c\or
   d\or e\or f\or g\or h\or i\else\@ctrerr\fi}}
\author{
        Marcin Pitera\,\thanks{Institute of Mathematics, Jagiellonian University, Cracow, Poland,
       \newline \hspace*{1.45em}  Email: \url{marcin.pitera@im.uj.edu.pl}.
        \vspace{0.5em}}
\and and \ \
{\L}ukasz Stettner\,\thanks{
       Institute of Mathematics, Polish Academy of Sciences, Warsaw, Poland,
        \newline \hspace*{1.45em} Email: \url{l.stettner@impan.pl}, research supported by NCN grant DEC-2012/07/B/ST1/03298.
         \vspace{0.5em}}
        }
\date{ {\small%
 First circulated: August 21, 2015\\
 This version: \today}}
\begin{document}

\maketitle

{\footnotesize
\begin{tabular}{l@{} p{350pt}}
  \hline \\[-.2em]
  \textsc{Abstract}: \ & In the paper portfolio optimization over long run risk sensitive criterion is considered. It is assumed that economic factors which stimulate asset prices are ergodic but non necessarily uniformly ergodic. Solution to suitable Bellman equation using local span contraction with weighted norms is shown. The form of optimal strategy is presented and examples of market models satisfying imposed assumptions are shown. \\[0.5em]
\textsc{Keywords:} \ &  risk sensitive portfolio, Bellman equation, weighted span norms, risk measures \\
\textsc{MSC2010:} \ & 93E20, 91G10, 91G80 \\[1em]
  \hline
\end{tabular}
}

\bigskip


\section{Introduction}
Many stochastic control methods are used in theoretical studies of portfolio management (cf. \cite{Pri2007} and references therein). Among them, risk sensitive control is one of the most recognised ones. For infinite time horizon, any portfolio value process $V$ and risk-averse parameter $\gamma<0$, the Risk sensitive criterion (RSC) function is given by
\begin{equation}\label{eq:RSC.def}
\varphi^{\gamma}(V):=\liminf_{t\rightarrow\infty }\frac{1}{t}\frac{1}{\gamma}\ln E[V_{t}^{\gamma}].
\end{equation}

Using this objective function in portfolio management gives us many advantages over the standard theoretical methods, which are usually based on expected utility criterions. Let us alone mention difficulties associated with the estimation of model parameters or traceable difficulties which arise, when we try to compute optimal trading strategies for the realistic security market models \cite{BiePli2003}. For RSC, applying Taylor expansion around $\gamma=0$, we get
\begin{equation}\label{eq:RSC:rtorew}
\varphi^{\gamma}(V)=\liminf_{t\rightarrow\infty }\frac{1}{t}\Big[E[\ln V_{t}]+\frac{\gamma}{2}Var(\ln V_{t}) +O(\gamma^{2},t)\Big],
\end{equation}
which shows that this map could be seen as a measure of performance, as it penalise expected growth rate with asymptotic variance multiplied by risk-averse parameter $\gamma<0$. Of course, this only applies for problems, for which the last term (i.e. $O(\gamma^{2},t)/t$) vanishes, when $t$ goes to infinity. Nevertheless, this assumption is satisfied for a lot of standard dynamics, as explained in~\cite[Section 5]{BiePli2003}, so \eqref{eq:RSC:rtorew} brings out the motivation, which led to this class of maps. We refer to \cite{BiePli2003} for a further discussion about economic properties of RSC.

Following~\cite{BieCiaPit2013,GulRus2015}, we would like to stress out the fact, that RSC could be seen as a risk-to-reward criterion. In fact, RSC could be considered as an {\it Acceptability index} \cite{CheMad2009,BieCiaZha2012}, the map quantifying the tradeoff between portfolio growth and the risk associated with it. Many methods from risk and performance measurement theory could be directly applied to RSC, as we will show in this paper.

From another point of view, RSC is a good objective function for many optimal control problems related to (controlled) Markov decision processes both on finite and infinite time horizons (cf. \cite{HerLas1996,Her1989,DiMSte1999,CavHer2005} and references therein). In particular, the connection to portfolio optimization was shown in \cite{BiePli1999}, where RSC was applied to continuous time infinite time horizon, and a version of Merton's intertemporal capital asset pricing model \cite{Mer1973} was considered. The analogous study for discrete time market model was done in \cite{Ste1999}.

Because of that, we have decided to present our results in such a way, that they might be interesting both for specialists from risk analysis, in particular studying dynamic growth indices, as well as for specialists from risk sensitive control Markov decision processes.

There are many sophisticated methods, which guarantee the existence of the solution to Bellman equation associated with RSC. Let us alone mention the {\it vanishing discount approach}~\cite{HerMar1996} or the {\it fixed point approach}~\cite{DiMSte1999}. The assumptions under which the existence of the solutions is guaranteed are usually related to ergodic properties of the considered process~\cite{DiMSte1999,KonMey2003,Her1989,HerMar1996}. The most recent results relate to localized Doeblin's conditions~\cite{CavHer2005} and Markov splitting techniques \cite{DiMSte2006}. The theory of RSC is also strictly connected to multiplicative Poisson equations \cite{DiMSte2006} and Issacs equations for ergodic cost stochastic dynamic games (cf. \cite{HerMar1996,FleHer1997,PraMenRun1996} and references therein).

In the paper, we generalize the results of \cite{Ste1999} in the sense that we consider market model with more general economic factors, which are not necessarily uniformly ergodic, and consequently studying Bellman equation we have to work with suitable weight functions.  Such more general economic factors were studied for Black Scholes market in the paper \cite{BiePli1999} and then continued for continuous time general diffusion models in \cite{Nag2003}. In this paper we are studying discrete time model and we were motivated by attempts to generalize risk neutral results of \cite{HaiMat2011} to the risk sensitive portfolio by the paper \cite{SheStaObe2013}.

The main novelty of the paper is that we obtain, using weighted span norm contraction method, the existence of solutions to suitable Bellman equation. Consequently, our paper can be applied to more general dynamics of the market than in \cite{Ste1999}. Furthermore we solve a risk sensitive control problem with unbounded solutions to the Bellman equation.

This paper is organized in follows. Section~\ref{S:Preliminaries} is the general setup. We state here all assumptions core to our study (e.g. on dynamics, control, etc.).  Next, in Section~\ref{S:norms} we recall some basic notation for the weighted norms and span-norms. In Section~\ref{S:RSC.Bellman} we present the main results of this paper, i.e. we state the Bellman equation and show when it could be solved. In Section~\ref{S:Strategy} we show how to connect Bellman equation to the initial investment problem. In particular we discuss how, given a solution to Bellman equation, construct the optimal strategy and when it is possible. Finally, in Section~\ref{S:RSC.Examples} we show exemplary dynamics, that could be fit to our model.

\section{Preliminaries}\label{S:Preliminaries}
Let $(\Omega,\cF,\{\cF_{t}\}_{t\in\bT},\bP)$ be a discrete-time filtered probability space, where $\bT=\bN$, $\cF_{0}$ is trivial and $\cF=\bigcup_{t\in\bT}\cF_{t}\,$. Moreover, let $L^{0}:=L^{0}(\Omega,\cF,\bP)$ denote the space of all (a.s. identified) $\cF$-measurable random variables.

We will assume that the market consists of $m$ risky assets (e.g. stocks, bonds, derivative securities) and $k$ economical factors (e.g. rates of inflation, short term interest rates, dividend yields). Prices of $m$ risky assets will be denoted by $S^{i}=(S_{t}^{i})_{t\in\bT}$ for ($i=1,\ldots,m$) and levels of $k$ economical factors will be denoted by $X^{j}=(X_{t}^{j})_{t\in\bT}$ for ($j=1,\ldots,k$). For simplicity, we will write $S:=(S_{t})_{t\in\bT}$ and $X:=(X_{t})_{t\in\bT}$, where $S_{t}=(S_{t}^{1},\ldots,S_{t}^{m})$ and $X_{t}=(X_{t}^{1},\ldots,X_{t}^{k})$.

We will use $\cA$ to denote the set of all $U$-valued adapted processes, where $U$ is a compact subset of $\bR^{m}$. Elements of $\cA$ will
correspond to all admissible portfolio strategies $H:=(H_{t})_{t\in\bT}$, where $H_{t}=(H_{t}^1,\ldots,H_{t}^m)$ and $H^{i}=(H^i_t)_{t\in\bT}$ is a part of capital invested in $i$-th risky asset (for $i=1,\ldots,m$). Furthermore, we will use notation $V^{H}=(V_{t}^{H})_{t\in\bT}$ to denote the portfolio value process corresponding to strategy $H$.

Throughout this paper we will make the following assumptions:

\begin{enumerate}

\item[(\namedlabel{as:rsc:A.1}{A.1})] The filtration $\{\cF_{t}\}_{t\in\bT}$ will be generated by a sequence of $k+m$ stochastic processes denoted by $W^{i}=(W^{i}_{t})_{t\in\bT}$ for $(i=1,\ldots,k+m)$. Moreover, $W_{t}=(W_{t}^{1},\ldots,W_{t}^{k+m})$ will be independent of $\cF_{t}$ and $Law(W_{t+1})=Law(W_{t})$, i.e. $W:=(W_{t})_{t\in\bT}$ will form a sequence of i.i.d. random vectors.

\item[(\namedlabel{as:rsc:A.2}{A.2})] The factor process $X$ will be Markov and will admit the following representation:
\[
X_0\in\bR^{k},\quad X_{t+1}=G(X_{t},W_{t}):=(G^{1}(X_{t},W_{t}),\ldots, G^{k}(X_{t},W_{t})),
\]
where $G^{i}\colon \bR^{k} \times \bR^{k+m}\to \bR^{k}$ is a Borel measurable function, continuous with respect to the first variable (for $i=1,\ldots,k$).
\item[(\namedlabel{as:rsc:A.3}{A.3})] For any $H\in\cA$, we will assume that the portfolio dynamics will be of the form
\begin{equation}\label{eq:rsc.dynamics}
V^{H}_{0}=V_{0},\quad\quad \ln\frac{V^{H}_{t+1}}{V^{H}_{t}}=F(X_t,H_t,W_t),
\end{equation}
for $t\in \bT$, where $V_{0}>0$ and $F\colon\bR^k\times U\times \bR^{k+m}\to \bR$ is a Borel measurable function, continuous with respect to the first two variables.

\item[(\namedlabel{as:rsc:A.4}{A.4})]  We will assume that for any $t\in\bT$, $x\in\bR^{k}$, $h\in U$ we have
\begin{align}
\omega(G(x,w)) &\leq a_{1}(w)+b_{1}\omega(x)\label{eq:assumpt.rsc.G},\\
|F(x,h,w)| & \leq a_{2}(w)+b_{2}\omega(x) \label{eq:assumpt.rsc},
\end{align}
for Borel measurable functions $a_1,a_2\colon \bR^{k+m}\to\bR_{+}$, constants $b_1\in (0,1)$, $b_2>0$ and continuous measurable function
$\omega\colon \bR^{k}\to [0,\infty)$, which we shall refer to as {\it the weight function}.
Moreover, we will assume that for any $\gamma\in \bR$,
\begin{equation}\label{eq:assumpt.kpm}
\mu^{\gamma}(a_1(W_{0}))\in\bR\quad\textrm{and}\quad\mu^{\gamma}(a_2(W_{0}))\in\bR,
\end{equation}
where $\mu^{\gamma}:L^{0}\to\bar{\bR}$ is the entropic utility measure, i.e.
\begin{equation}\label{eq:entrDRM}
\mu^{\gamma}(X):=\left\{
\begin{array}{ll}
\frac{1}{\gamma}\ln \bE[\exp(\gamma X)] &\quad \textrm{if } \gamma\neq 0,\\
\bE[X] & \quad \textrm{if } \gamma=0.
\end{array}\right.
\end{equation}

\item[(\namedlabel{as:rsc:A.5}{A.5})] For any $R>0$, there exists a constant $c>0$ and probability measure $\nu$, such that
\begin{equation}\label{eq:rsc:unerg}
\inf_{x\in C_{R}}\bP[G(x,W_0)\in A]\geq c\nu(A),\quad A\in\cB(\bR^{k}),
\end{equation}
where $C_{R}=\{x\in\bR^{k}\colon \omega(x)\leq R\}$.
\end{enumerate}

Assumptions \eqref{as:rsc:A.1} and \eqref{as:rsc:A.2} are classic conditions imposed on the probability space and the factor process, respectively.

Assumption \eqref{as:rsc:A.3} is technical -- it allows to model portfolios through log-returns, rather than value processes (see e.g.  Example~\ref{ex:rsc1} or \cite{Ste1999} for more details).

Assumption \eqref{as:rsc:A.4} has a financial interpretation. The state-space constraints $b_1$ and $b_2$ introduced in  \eqref{eq:assumpt.rsc.G} and \eqref{eq:assumpt.rsc} say that in our model we allow only $\omega$-growth (i.e. growth proportional to the growth of $\omega$) with respect to the state space. In particular, inequality \eqref{eq:assumpt.rsc.G} might be seen as a form of the geometric drift condition imposed on $X$  (cf. \cite{HaiMat2011}).
On the other hand, assumption \eqref{eq:assumpt.kpm} allow us to have control over the entropy of the noise part. In a more probabilistic setting, it is equivalent to the statement that the moment generating functions for $a_1(W_{0})$ and $a_2(W_{0})$ exist. In particular, we might say that the utility (or risk) of a single period log-return at time $t$ measured by $\mu^{\gamma}$ (or $-\mu^{\gamma}$) must be finite for any simple trade (in any fixed state) and in fact it is bounded by $\pm a_2(W_{t})$ plus some constant (dependant on the state). Please note, that this assumption is rather weak, and fulfilled by standard models, which describe log-returns as processes of the form
\[
F(x,h,W_{t})=a(x,h,W_{t})+\sum_{i=1}^{k+m}b(x,h)W_{t}^{i},
\]
where $W_{t}$ is a random vector with multidimensional normal distribution and functions $a$ and $b$ satisfy $\omega$-growth constraints. Then, the function $a_2$ could be constructed using random variables $\min(W_{t}^{1},\ldots,W_{t}^{k+m})$ and $\max(W_{t}^{1},\ldots,W_{t}^{k+m})$.

Assumption \eqref{as:rsc:A.5} is a (local) minorization property. Combined with the geometric drift condition, it allow us to exploit the ergodic properties of $X$ (cf. \cite{HaiMat2011}). Please note that setting $\omega\equiv 0$, for any $R>0$ we get $C=\bR^{k}$. Consequently, in this particular case, \eqref{as:rsc:A.5} becomes a global Doeblin's condition, which is equivalent to the uniform ergodicity of process $X$. On the other hand, if $\omega$ is unbounded and $C_{R}$ is compact for any $R>0$, then \eqref{eq:rsc:unerg} is directly linked to the (local) mixing condition, i.e. the statement that for any fixed compact subset $K$ (of $\bR^{k}$), we get
\begin{equation}\label{eq:loc.min}
\sup_{x,y\in K}\sup_{A\in\cB(\bR^{k})}\left|\bP[G(x,W_1)\in A]-\bP[G(y,W_1)\in A]\right|<1.
\end{equation}

The main goal of this paper is to optimize the risk sensitive cost criterion $\varphi^{\gamma}$ given by~\eqref{eq:RSC.def}, i.e.
\[
\varphi^{\gamma}(V)=\liminf_{t\rightarrow\infty }\frac{1}{t}\frac{1}{\gamma}\ln E[V_{t}^{\gamma}],
\]
where $\gamma<0$ is a fixed risk aversion parameter and $V$ is portfolio value process. In other words, given the set $\cA$ and dynamics of $V^{H}$ for any $H\in\cA$, we want to solve the optimal stochastic control problem
\begin{equation}\label{eq:RSC:begin}
\sup_{H\in\cA}\varphi^{\gamma}(V^{H}).
\end{equation}
Using the entropic representation of $\varphi^{\gamma}$~(see \cite{BieCiaPit2013} for more details) and \eqref{eq:rsc.dynamics}, for any $H\in\cA$, we get
\begin{equation}\label{eq:rsc.1}
\varphi^{\gamma}(V^{H})=\liminf_{t\to\infty}\frac{\mu^{\gamma}\big(\ln
\frac{V^{H}_{t}}{V^{H}_{0}}\big)}{t}=\liminf_{t\to\infty}\frac{\mu^{\gamma}(\sum_{i=0}^{t-1}F(X_i,H_i,W_i))}{t},
\end{equation}
where $\mu^{\gamma}$ is entropic utility measure given by \eqref{eq:entrDRM}.
Note that the first equality in~\eqref{eq:rsc.1} provides another financial interpretation of the RSC. The logarithmic transform of $V_{t}^{H}$ allow us to measure the cumulative growth (log return) at time $t$, while the map $\mu^{\gamma}$ is used to evaluate its (entropic) utility. Then, we divide the outcome by $t$ to normalise it in time and use $\liminf$ to measure (a worst case robust version of) the long-time efficiency of the value process (cf. \cite{BieCiaPit2013}).

Under the above assumptions, from \eqref{eq:rsc.1}, it is not difficult to see, that the optimal value of the problem \eqref{eq:RSC:begin} will be finite, which is in fact the statement of Proposition~\ref{pr:RSC.fin}.
\begin{proposition}\label{pr:RSC.fin}
Let $\gamma<0$. Under assumptions \eqref{as:rsc:A.1}--\eqref{as:rsc:A.4}, we get
\[-\infty<\sup_{H\in\cA}\varphi^{\gamma}(V^{H})<\infty.\]
\end{proposition}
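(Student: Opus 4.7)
The plan is to produce, for every $H\in\cA$, a deterministic $H$-uniform two-sided pointwise bound on the cumulative log-return $\sum_{i=0}^{t-1}F(X_i,H_i,W_i)$ expressed purely in terms of $\omega(X_0)$ and sums of i.i.d.\ functionals of $W_i$, and then exploit monotonicity, cash-invariance and independence-additivity of the entropic utility $\mu^{\gamma}$.

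\textbf{Iterating the drift condition.} First, I would iterate \eqref{eq:assumpt.rsc.G} to obtain the induction bound $\omega(X_t)\le b_1^t\omega(X_0)+\sum_{j=0}^{t-1}b_1^{t-1-j}a_1(W_j)$, sum over $i=0,\ldots,t-1$, and swap the order of summation. Since $b_1\in(0,1)$, the geometric factors contribute at most $1/(1-b_1)$, leaving
\[
\sum_{i=0}^{t-1}\omega(X_i)\le \frac{\omega(X_0)}{1-b_1}+\frac{1}{1-b_1}\sum_{j=0}^{t-1}a_1(W_j).
\]
Combined with the pointwise bound $|F(X_i,H_i,W_i)|\le a_2(W_i)+b_2\omega(X_i)$ from \eqref{eq:assumpt.rsc}, this yields, writing $Z_i:=a_2(W_i)+\tfrac{b_2}{1-b_1}a_1(W_i)$ and $C_0:=\tfrac{b_2\omega(X_0)}{1-b_1}$, the $H$-uniform two-sided inequality
\[
-\sum_{i=0}^{t-1}Z_i-C_0\;\le\;\sum_{i=0}^{t-1}F(X_i,H_i,W_i)\;\le\;\sum_{i=0}^{t-1}Z_i+C_0.
\]

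\textbf{Applying $\mu^{\gamma}$.} Next I would use that for $\gamma<0$ the map $\mu^{\gamma}$ is monotone and cash-invariant, and for a sum of independent random variables it satisfies $\mu^{\gamma}(\sum_i Y_i)=\sum_i\mu^{\gamma}(Y_i)$ (from $\bE[e^{\gamma\sum Y_i}]=\prod\bE[e^{\gamma Y_i}]$). Since the $Z_i$ are i.i.d., applying $\mu^{\gamma}$ to the two-sided inequality and using \eqref{eq:rsc.1} gives
\[
t\mu^{\gamma}(-Z_0)-C_0\;\le\;\mu^{\gamma}\Bigl(\sum_{i=0}^{t-1}F(X_i,H_i,W_i)\Bigr)\;\le\;t\mu^{\gamma}(Z_0)+C_0.
\]
Dividing by $t$ and taking $\liminf$ then yields $\mu^{\gamma}(-Z_0)\le \varphi^{\gamma}(V^H)\le \mu^{\gamma}(Z_0)$ uniformly in $H\in\cA$, and the proposition will follow once both sides are shown to lie in $\bR$.

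\textbf{Finiteness of $\mu^{\gamma}(\pm Z_0)$.} This is the main obstacle: \eqref{eq:assumpt.kpm} gives finite entropic utility for each $a_i(W_0)$ separately, but here we need it for the linear combination $Z_0$. The trick is that \eqref{eq:assumpt.kpm} is posited for \emph{every} $\gamma\in\bR$, i.e.\ full moment generating functions exist, so a Cauchy--Schwarz (or Hölder) inequality
\[
\bE[e^{\pm\gamma Z_0}]\le \bE\bigl[e^{\pm 2\gamma a_2(W_0)}\bigr]^{1/2}\bE\bigl[e^{\pm 2\gamma \frac{b_2}{1-b_1}a_1(W_0)}\bigr]^{1/2}
\]
keeps both factors finite. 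Positivity of $\bE[e^{\pm\gamma Z_0}]$ is immediate since $Z_0$ is a.s.\ finite, so $\mu^{\gamma}(\pm Z_0)\in\bR$ and both uniform bounds on $\varphi^{\gamma}(V^H)$ are finite, completing the proof.
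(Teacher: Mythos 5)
Your proposal is correct and follows essentially the same route as the paper: iterate the geometric drift condition \eqref{eq:assumpt.rsc.G} to get a two-sided bound on $\sum_{i=0}^{t-1}F(X_i,H_i,W_i)$ by $\pm\bigl(\tfrac{b_2}{1-b_1}\omega(X_0)+\sum_i Z_i\bigr)$ with $Z_i=a_2(W_i)+\tfrac{b_2}{1-b_1}a_1(W_i)$, then use monotonicity, translation invariance and independence-additivity of $\mu^{\gamma}$. Your explicit Cauchy--Schwarz argument for $\mu^{\gamma}(\pm Z_0)\in\bR$ spells out a step the paper leaves implicit when it invokes \eqref{eq:assumpt.kpm}, and it is a welcome addition since that assumption is stated only for $a_1(W_0)$ and $a_2(W_0)$ separately.
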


\begin{proof}\textbf{}
Using \eqref{as:rsc:A.3} and \eqref{as:rsc:A.4}, for any $H\in\cA$ and $t\in\bT$, we get
\begin{align*}
\sum_{i=0}^{t-1}F(X_i,H_i,W_i) & \leq \sum_{i=0}^{t-1}a_2(W_{i})+b_2\omega(X_{i})\\
& \leq \sum_{i=0}^{t-1}\left(a_2(W_{i}) +b_2\left(b_1^{i}\omega(X_{0})+\sum_{j=0}^{i-1}b_1^{j}a_1(W_{i-j})\right)\right)\\
& \leq \frac{b_2}{1-b_1}\omega(X_{0})+ \sum_{i=0}^{t-1}\left( a_2(W_{i})+\frac{b_2}{1-b_1} a_1(W_{i})\right).
\end{align*}

As the entropic utility measure $\mu^{\gamma}$ is monotone, translation invariant, additive for any two independent random variables and law
invariant
\cite{KupSch2009}, for any $t\in\bT$, we get
\begin{align*}
\mu^{\gamma}\left(\sum_{i=0}^{t-1}F(X_i,H_i,W_i)\right) & \leq
\frac{b_2}{1-b_1}\omega(X_{0})+\sum_{i=0}^{t-1}\mu^{\gamma}\left(a_2(W_{i})+\frac{b_2}{1-b_1}a_1(W_{i})\right)\\
& =\frac{b_2}{1-b_1}\omega(X_{0})+t\mu^{\gamma}\left(a_2(W_{0})+\frac{b_2}{1-b_1}a_1(W_{0})\right).
\end{align*}
Consequently, using \eqref{eq:rsc.1} and \eqref{eq:assumpt.kpm}, for any $H\in\cA$, we get
\[
\varphi^{\gamma}(V^{H})= \liminf_{t\rightarrow\infty }\frac{\mu^{\gamma}\left(\sum_{i=0}^{t-1}F(X_i,H_i,W_i)\right)}{t}\leq
\mu^{\gamma}\left(a_2(W_{0})+\frac{b_2}{1-b_1}a_1(W_{0})\right)<\infty.
\]
The proof of the other inequality is analogous.
\end{proof}

\section{Weighted norms}\label{S:norms}
In assumption \eqref{as:rsc:A.4} we have introduced measurable and continuous function  $\omega\colon \bR^{k}\to [0,\infty)$, which we referred
to as
{\it the weight function}. Following~\cite{HaiMat2011} let us now recall basic notation regarding those function. We shall denote by
$\cC_{\omega}(\bR^{k})$ the set of all continuous and measurable functions $f:\bR^{k}\to\bR$, such that the $\omega$-norm of $f$
is
bounded, i.e.
\[
\|f\|_{\omega}:=\sup_{x\in\bR^{k}}\frac{|f(x)|}{1+\omega(x)}<\infty.
\]
Next, we define $\omega$-span seminorm of $f\in \cC_{\omega}(\bR^{k})$ by
\[
\|f\|_{\omega\textrm{-span}}:=\sup_{x,y\in\bR^{k}}\frac{f(x)-f(y)}{2+\omega(x)+\omega(y)}.
\]

\begin{remark}\label{rem:span}
The classic span-norm of function $f\colon\bR^{k}\to \bR$ (cf.~\cite{HerLas1996} and references therein) is usually defined as $\| f \|_{\textrm{span}}=\sup_{x}f(x)-\inf_{y}f(y)$. Note that in our framework, using $\omega\equiv 0$, we get $\|f\|_{\omega\textrm{-span}}=\frac{\sup_{x}f(x)-\inf_{x}f(x)}{2}=\frac{1}{2}\|f\|_{\textrm{span}}$. Moreover, for any bounded weight function $\omega$, we know that $\| \cdot \|_{\textrm{span}}$ and $\|\cdot\|_{\omega\textrm{-span}}$ are equivalent.
\end{remark}

For any $\beta>0$ we shall also define the weighted (semi)norms given by
 \begin{align*}
\|f\|_{\beta,\omega} & :=\sup_{x\in\bR^{k}}\frac{|f(x)|}{1+\beta\omega(x)},\\
\|f\|_{\beta,\omega\textrm{-span}} &:=\sup_{x,y\in\bR^{k}}\frac{f(x)-f(y)}{2+\beta\omega(x)+\beta\omega(y)}.
\end{align*}
Please note that for any $\beta>0$ and $c\geq 0$, the function $\omega'\colon \bR^{k}\to [0,\infty)$, given by $\omega'(x)=\beta\omega(x)+c$ is
also
a weight function. Let us now recall some basic properties of weighted norms and related span norms.

\begin{proposition}\label{pr:omega}
Let $\omega\colon \bR^{k}\to [0,\infty)$ be a weight function. Then
\begin{enumerate}[1)]
\item For any $\beta>0$, the norms $\|\cdot\|_{\omega}$ and $\|\cdot\|_{\beta,\omega}$ are equivalent.
\item For any $\beta>0$, the seminorms $\|\cdot\|_{\omega\textrm{-span}}$ and $\|\cdot\|_{\beta,\omega\textrm{-span}}$ are equivalent.
\item For any $0<\beta<1$ and $f\in\cC_{\omega}(\bR^{k})$, we get $\|f\|_{\omega\textrm{-span}}\leq \|f\|_{\beta,\omega\textrm{-span}}$.
\item For any $f\in\cC_{\omega}(\bR^{k})$ we get $\inf_{c\in\bR}\|f+c\|_{\omega}=\|f\|_{\omega\textrm{-span}}$.
\item Let $f\in \cC_{\omega}(\bR^{k})$ and $c\in\bR$. Then $\|f+c\|_{\omega}=\|f\|_{\omega\textrm{-span}}$ if and only if $c\in [c_1,c_2]$, where
\begin{equation}\label{eq:c.minus}
c_1=-\inf_{x\in\bR^{k}} \left\{f(x)+(1+\omega(x))\|f\|_{\omega\textrm{-span}}\right\},
\end{equation}
\begin{equation}\label{eq:c.plus}
c_2=-\sup_{x\in\bR^{k}} \left\{f(x)-(1+\omega(x))\|f\|_{\omega\textrm{-span}}\right\}.
\end{equation}
Moreover,  there exists $c_0\in \{c_1,c_2\}$, such that
\begin{equation}\label{eq:centering.span}
\|f+c_0\|_{\omega}=\sup_{x\in\bR^{k}}\frac{f(x)+c_0}{1+\omega(x)}=-\inf_{x\in\bR^{k}}\frac{f(x)+c_0}{1+\omega(x)}.
\end{equation}
\end{enumerate}
\end{proposition}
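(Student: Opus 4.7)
The plan is to handle parts (1)--(3) via elementary denominator comparisons, then prove (4) together with the characterisation part of (5) by explicitly identifying the set $\{c:\|f+c\|_\omega\le\|f\|_{\omega\textrm{-span}}\}$ with $[c_1,c_2]$, and finally settle the \emph{moreover} statement in (5) via a dichotomy argument driven by a span-attaining sequence. For (1) and (2), I rely on the elementary double inequality $\min(1,\beta)(1+\omega(x))\le 1+\beta\omega(x)\le\max(1,\beta)(1+\omega(x))$, which holds because $\omega\ge 0$, together with the same estimate for $2+\beta\omega(x)+\beta\omega(y)$; taking suprema in the defining quotients yields the two-sided norm and seminorm equivalences. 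For (3), $0<\beta<1$ gives $2+\beta\omega(x)+\beta\omega(y)\le 2+\omega(x)+\omega(y)$, and since the span seminorm is invariant under $(x,y)\leftrightarrow(y,x)$ the sup may be restricted to pairs with $f(x)\ge f(y)$, on which the $\beta$-weighted quotient dominates termwise.

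For (4), write $s:=\|f\|_{\omega\textrm{-span}}$. The lower bound $s\le\|f+c\|_\omega$ for every $c$ follows from the pointwise estimate
\[
f(x)-f(y)=(f(x)+c)-(f(y)+c)\le\|f+c\|_\omega\bigl[(1+\omega(x))+(1+\omega(y))\bigr].
\]
Setting $F(x):=f(x)-s(1+\omega(x))$ and $G(y):=f(y)+s(1+\omega(y))$, the definition of $s$ rearranges to $F(x)\le G(y)$ for all $x,y$, i.e.\ $\sup F\le\inf G$, which is exactly $c_1\le c_2$, so $[c_1,c_2]\ne\emptyset$. Unpacking \eqref{eq:c.minus}--\eqref{eq:c.plus}, $c\in[c_1,c_2]$ is equivalent to $|f(x)+c|\le s(1+\omega(x))$ for every $x$, hence $\|f+c\|_\omega\le s$; combined with the lower bound this proves (4) and the $\Leftrightarrow$ in the first part of (5), the remaining implication $c\notin[c_1,c_2]\Rightarrow\|f+c\|_\omega>s$ following from the same pointwise estimate.

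For the \emph{moreover} claim, choosing a minimising sequence $x_n$ for $G$ gives $G(x_n)+c_1\to 0$, hence $(f(x_n)+c_1)/(1+\omega(x_n))=(G(x_n)+c_1)/(1+\omega(x_n))-s\to -s$, so $-\inf_x(f(x)+c_1)/(1+\omega(x))=s$; by the symmetric argument with a maximising sequence for $F$, $\sup_x(f(x)+c_2)/(1+\omega(x))=s$. The content of \emph{moreover} thus reduces to showing that at least one of $\sup_x(f(x)+c_1)/(1+\omega(x))=s$ or $-\inf_x(f(x)+c_2)/(1+\omega(x))=s$ holds, for then $c_0\in\{c_1,c_2\}$ can be chosen as the corresponding endpoint and \eqref{eq:centering.span} follows. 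This dichotomy is the main obstacle; I would prove it by contradiction. If both fail, then $\limsup_{\omega(x)\to\infty}f(x)/(1+\omega(x))<s$ and $\liminf_{\omega(y)\to\infty}f(y)/(1+\omega(y))>-s$, producing $\eta>0$ and $R>0$ with $|f(x)|\le(s-\eta)(1+\omega(x))$ whenever $\omega(x)\ge R$, together with the uniform bound $|f|\le\|f\|_\omega(1+R)$ on $\{\omega\le R\}$ coming from $f\in\cC_\omega(\bR^k)$. Applying these estimates to any sequence $(x_n,y_n)$ realising $\|f\|_{\omega\textrm{-span}}=s$ and splitting into the three cases where $\omega(x_n)$ and $\omega(y_n)$ are both bounded, only one is unbounded, or both are unbounded, one checks in each case that
\[
\limsup_n\frac{f(x_n)-f(y_n)}{2+\omega(x_n)+\omega(y_n)}\le s-\eta,
\]
contradicting the defining property of the sequence (in the fully bounded case one additionally obtains $c_1=c_2$, which trivialises the dichotomy).
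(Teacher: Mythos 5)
Your proposal is correct, and for the hardest piece --- the \emph{moreover} claim in 5) --- it takes a genuinely different route from the paper. Your parts 1)--4) and the characterisation $\{c:\|f+c\|_{\omega}=\|f\|_{\omega\textrm{-span}}\}=[c_1,c_2]$ are essentially a streamlined version of the paper's argument: you identify the interval in one step as the sublevel set $\{c: |f(x)+c|\le s(1+\omega(x))\ \forall x\}$, whereas the paper first verifies $\|f+c_1\|_{\omega}=\|f+c_2\|_{\omega}=s$, interpolates by convexity of $c\mapsto\|f+c\|_{\omega}$, and proves the converse inclusion by the same pointwise bound you use. For the existence of $c_0\in\{c_1,c_2\}$, the paper introduces $a_{+}(c)=\sup_z\frac{f(z)+c}{1+\omega(z)}$ and $a_{-}(c)=-\inf_z\frac{f(z)+c}{1+\omega(z)}$, produces an equalising $c_0$ by the intermediate value theorem, locates it in $[c_1,c_2]$ by minimality, and then uses that a convex nondecreasing (resp.\ nonincreasing) function constant on a nondegenerate subinterval is constant on the whole half-line to push the equality out to the endpoints; this is short and avoids any case analysis. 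You instead establish $a_{-}(c_1)=s$ and $a_{+}(c_2)=s$ directly from minimising/maximising sequences and then prove the dichotomy $a_{+}(c_1)=s$ or $a_{-}(c_2)=s$ by contradiction, splitting a span-realising sequence $(x_n,y_n)$ according to the boundedness of $\omega(x_n),\omega(y_n)$; this is more hands-on and makes visible where the span is ``attained at infinity,'' at the cost of a trichotomy. One small imprecision: in the fully bounded case the estimate $|f(x)|\le(s-\eta)(1+\omega(x))$ for $\omega(x)\ge R$ is vacuous and you do not get $\limsup_n\le s-\eta$ there; what you actually get (as your parenthetical correctly anticipates) is that the defects $s(1+\omega(x_n))-(f(x_n)+c)$ and $(f(y_n)+c)+s(1+\omega(y_n))$ tend to zero for each $c\in[c_1,c_2]$, which forces $c_1=c_2$ and hence the dichotomy trivially --- so the argument closes, but that case yields its contradiction by a different mechanism than the other two and should be written out separately.
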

\begin{proof}
The proof of properties 1), 2) and 3) is straightforward and hence omitted here.

4) The proof is based on \cite[Lemma 2.1]{HaiMat2011} and is recalled for completeness. Let  $f\in \cC_{\omega}(\bR^{k})$.

For any $x\in \bR^{k}$, we get $|f(x)|\leq \|f\|_{\omega}(1+\omega(x))$, which in turn implies
\[
{f(x)-f(y)\over 2+ \omega(x)+\omega(y)}\leq {\|f\|_{\omega}\left[2+\omega(x)+\omega(y)\right] \over 2+\omega(x) + \omega(y)}=\|f\|_{\omega},\quad\quad x,y\in\bR^{k}.
\]
Consequently, for any $c\in\bR$ we get
\begin{equation}\label{eq:inequality.a1}
\|f\|_{\omega\textrm{-span}}=\|f+c\|_{\omega\textrm{-span}}\leq \|f+c\|_{\omega}.
\end{equation}
Let us now prove the other inequality. Noting, that we could take $a\cdot f$ instead of $f$, for some $a>0$ and the proof for the case $\|f\|_{\omega\textrm{-span}}=0$ is trivial, without loss of generality we could assume that $\|f\|_{\omega\textrm{-span}}=1$. By the definition of $\|\cdot\|_{\omega\textrm{-span}}$ and the fact that  $\|f\|_{\omega\textrm{-span}}=1$, we get
\[
f(x)- [f(y)+1 +\omega(y)]\leq 1+\omega(x),
\]
for any $x,y \in \bR^{k}$. Thus, $c_1:=-\inf_{y\in\bR^{k}} \left\{f(y)+1+\omega(y)\right\}\in\bR$ and for any $x\in\bR^k$, we get
\begin{equation}\label{eq:a1}
f(x)+c_1= \sup_{y\in\bR^k} \left[f(x)-f(y)-1-\omega(y)\right]\leq 1+\omega(x).
\end{equation}
On the other hand, for any $x\in\bR^{k}$, we get
\begin{equation}\label{eq:a2}
f(x)+c_1=\sup_{y\in\bR^k} \left[f(x)-f(y)-1-\omega(y)\right]\geq f(x)-f(x)-1-\omega(x)= -(1+\omega(x)).
\end{equation}
Combining \eqref{eq:a1} and \eqref{eq:a2}, we get $\|f+c_1\|_{\omega}\leq 1$. This, together with \eqref{eq:inequality.a1}, concludes the proof of 4).

5) Let  $f\in \cC_{\omega}(\bR^{k})$ and let $c\in\bR$. Repeating and slightly modifying the proof of 4) it is easy to check that
\begin{equation}\label{eq:5.1}
\|f+c_1\|_{\omega}=\|f+c_2\|_{\omega}=\|f\|_{\omega\textrm{-span}}.
\end{equation}
If $c\in[c_1,c_2]$, then there exists $\alpha\in [0,1]$ such that $c=\alpha c_1+(1-\alpha) c_2$. Thus, using \eqref{eq:inequality.a1} and \eqref{eq:5.1}, we get
\[
\|f\|_{\omega\textrm{-span}}\leq \|f+c\|_{\omega} \leq \alpha \|f+c_1\|_{\omega} + (1-\alpha)\|f+c_2\|_{\omega}=\|f\|_{\omega\textrm{-span}}.
\]
On the other hand, we know that if $\|f+c\|_{\omega}=\|f\|_{\omega\textrm{-span}}$, then for any $x\in\bR^{k}$ we get
\[
-\|f\|_{\omega\textrm{-span}}\leq \frac{f(x)+c}{1+\omega(x)}\leq \|f\|_{\omega\textrm{-span}}.
\]
Because of that, for any $x\in\bR^{k}$ we have
\[
-f(x)-(1+\omega(x))\|f\|_{\omega\textrm{-span}}\leq c \leq -f(x)+(1+\omega(x))\|f\|_{\omega\textrm{-span}},
\]
and consequently $c_1\leq c\leq c_2$. This completes the first part of the proof.
Let us now show that there exists (at least one) $c_0\in [c_1,c_2]$, satisfying \eqref{eq:centering.span}.

Given $f\in \cC_{\omega}(\bR^{k})$, for any $c\in\bR$ we define
\[
a_{+}(c):=\sup_{z\in\bR^{k}}\frac{f(z)+c}{1+\omega(z)}\quad\textrm{and}\quad
a_{-}(c):=-\inf_{z\in\bR^{k}}\frac{f(z)+c}{1+\omega(z)}.
\]
It is easy to note that $a_+(\cdot)$ is finite, continuous and non-decreasing, while $a_-(\cdot)$ is finite, continuous and non-increasing. Moreover
$a_+(c)\to\infty$, as $c\to\infty$, and $a_-(c)\to\infty$, as $c\to-\infty$.
Thus, there exists $c_0\in\bR$, such that $a_{+}(c_0)=a_{-}(c_0)$. Moreover, for any $c\geq c_0$ we get
\[
\|f+c\|_{\omega}=\max(a_{+}(c),a_{-}(c))\geq a_{+}(c_0)=\max(a_{+}(c_0),a_{-}(c_0))=\|f+c_0\|_{\omega},
\]
while for  $c\leq c_0$ w get
\[
\|f+c\|_{\omega}=\max(a_{+}(c),a_{-}(c))\geq a_{-}(c_0)=\max(a_{+}(c_0),a_{-}(c_0))=\|f+c_0\|_{\omega}.
\]
Consequently,
\begin{equation}\label{eq:w.span.eq}
a_{+}(c_0)=a_{-}(c_0)=\|f+c_0\|_{\omega}=\inf_{c\in\bR}\|f+c\|_{\omega}=\|f\|_{\omega\textrm{-span}}.
\end{equation}
By the first part of the proof of 5), we know that $c_0\in [c_1,c_2]$.
If $c_0$ is equal to $c_1$ or $c_2$, then the proof is finished. On the contrary, let us assume that $c_0\not\in\{c_1,c_2\}$. By 
using monotonicity of $a_{+}(\cdot)$ we have $a_{+}(c_0)\leq a_{+}(c_2)$  and by \eqref{eq:w.span.eq} using 
\[
\|f+c_0\|_{\omega}=\|f+c_1\|_{\omega}=\|f+c_2\|_{\omega}=\max(a_{+}(c_2),a_{-}(c_2)),
\]
we obtain $ a_{+}(c_2)= a_{+}(c_0)$. Consequently $a_+(\cdot)$ must be constant on $[c_0,c_2]$ and as a convex nondecreasing mapping it is in fact constant on $(-\infty,c_2]$.
Using similar arguments, we get that $a_-(\cdot)$  as a nonincreasing convex mapping must be constant on $[c_1,\infty]$. Consequently, both $c_1$ and $c_2$ satisfy \eqref{eq:centering.span}, which concludes the proof.
\end{proof}


\begin{remark}
We might get $c_1\neq c_2$. Let $f(x)=0$ for $|x|\leq 1$, and $f(x)=|x-{1\over x}|$ for $|x|\geq 1$. Then, for $\omega(x)=|x|$, it is easy to check that $\|f\|_{\omega\textrm{-span}}=1$, $c_1=-1$ and $c_2=1$.
Moreover, one might look at $c_0$ as a centering constant for weighted $f$, i.e. the constant, such that the distance from $0$ to $\sup_{x\in\bR^{k}}\frac{f(x)+c_0}{1+\omega(x)}$ is the same as the distance from 0 to $\inf_{x\in\bR^{k}}\frac{f(x)+c_0}{1+\omega(x)}$. In particular, the $\|\cdot\|_{\omega\textrm{-span}}$ seminorm might be considered as a $\|\cdot\|_{\omega}$ norm for centered function, which provide some insight for 4) in Proposition~\ref{pr:omega}.
\end{remark}

Proposition~\ref{pr:omega} implies that for any $\beta>0$, $c\geq 0$, $f:\bR^{k}\to \bR$ and $\omega'$ defined by $\omega'(x)=\beta\omega(x)+c$,
we
get
\begin{equation}\label{eq:omega'}
\|f\|_{\omega}<\infty \iff \|f\|_{\omega'}<\infty,
\end{equation}
which in turn implies
\[
\cC_{\omega}(\bR^{k})=\cC_{\omega'}(\bR^{k}).
\]
Moreover, if a family of functions is uniformly bounded wrt. $\omega$-span norm, then it is uniformly bounded wrt. $\omega'$-span norm.

Next, for any $\beta>0$, two probability measures $\bQ_{1}$ and $\bQ_{2}$ on $(\bR^{k},\cB(\bR^{k}))$ and the corresponding signed measure $\bH=\bQ_{1}-\bQ_{2}$, let $\|\bH\|_{\beta,\omega\textrm{-var}}$ denote its weighted total variation norm given by
\[
\|\bH \|_{\beta,\omega\textrm{-var}}
=\int_{\bR^{k}}\big(1+\beta\omega(z)\big)|\bH|(dz)=\sup_{\varphi: \|\varphi\|_{\beta,\omega}\leq 1}\int_{\bR^{k}}\varphi(z)\bH(dz),
\]
where $|\bH|$ denote the total variation of $\bH$, i.e.
\[
|\bH|=1_A \bH - 1_{A^c}\bH,
\]
for $A$ being a positive set for measure $\bH$ (obtained e.g. using Hahn-Jordan decomposition). In particular (for $\omega\equiv 0$), let $\|\bH\|_{\textrm{var}}$ denote the the standard total variation norm \cite{HerLas1996}, i.e.
\[
\|\bH \|_{\textrm{var}}:=\int_{\bR^{k}}|\bH|(dz)=2\sup_{A\in\cB(\bR^{k})}|\bQ_{1}(A)-\bQ_{2}(A)|.
\]

\section{Bellman equation}\label{S:RSC.Bellman}
Using representation \eqref{eq:rsc.1}, it is not hard to see that the Bellman equation corresponding to \eqref{eq:RSC:begin} is of the form
\begin{equation}\label{eq:rsc:bellmaneq}
v(x)+\lambda=\sup_{h\in U}\mu^{\gamma}(F(x,h,W_0)+v(G(x,W_0))),
\end{equation}
where $\lambda\in\bR$, $v\in \cC_{\omega}(\bR^{k})$, $x\in\bR^{k}$ and $\omega\colon \bR^{k}\to [0,\infty)$ is a weight function from
\eqref{as:rsc:A.4}, for which the corresponding Bellman operator
\begin{equation}\label{Rf}
R_{\gamma}f(x):=\sup_{h\in U}\mu^{\gamma}(F(x,h,W_0)+f(G(x,W_0))),\quad f\in \cC_{\omega}(\bR^{k}),
\end{equation}
satisfies certain contraction properties.

For computational convenience, let us introduce the associated Bellman equation
\begin{align}
u(x)+\lambda \gamma &=\gamma\sup_{h\in U}\mu^{\gamma}(F(x,h,W_0)+{u(G(x,W_0))\over \gamma})\nonumber\\
& =\inf_{h\in U}\ln \bE[e^{\gamma F(x,h,W_0)+u(G(x,W_0))}]\nonumber\\
& =T_{\gamma}u(x) \label{eq:rsc:bellmaneq2},
\end{align}
where $u(x)=\gamma v(x)$ and where the corresponding Bellman operator takes the form
\begin{equation}\label{Tf}
T_{\gamma}f(x):=\gamma R_{\gamma}\frac{f(x)}{\gamma}=\inf_{h\in U}\ln \bE[e^{\gamma F(x,h,W_0)+f(G(x,W_0))}],\quad f\in \cC_{\omega}(\bR^{k}).
\end{equation}
\begin{remark}
Bellman equation \eqref{eq:rsc:bellmaneq2} is strictly connected to the Multiplicative Poisson Equation (MPE) defined for corresponding $\gamma$
(cf.
\cite{DiMSte2006} and references therein). Sufficient general conditions for which there exists a solution to MPE in the classic case (i.e.
using
ergodicity conditions and span norm or vanishing discount approach) could be found e.g. in \cite{DiMSte1999,KonMey2003,Her1989,HerMar1996}. For a more general conditions (obtained using splitting Markov techniques or Doeblin's condition) see e.g. \cite{DiMSte2006,CavHer2005}. Also using
robust
representation of the risk measure (i.e. $-\mu^{\gamma}$) \cite{FolSch2002}, one could notice that equation \eqref{eq:rsc:bellmaneq} corresponds
to
the Isaacs equation for ergodic cost stochastic dynamic game (cf. \cite{HerMar1996,FleHer1997} and references therein).
\end{remark}

\begin{proposition}\label{pr:RSC.feller}
Let $\gamma<0$. Under assumptions \eqref{as:rsc:A.1}--\eqref{as:rsc:A.4}, the operators $R_{\gamma}$ and $T_{\gamma}$ transforms the
set
$\cC_{\omega}(\bR^{k})$ into itself and for $f\in \cC_{\omega}(\bR^{k})$ the mapping $(-\infty,0)\times \bR^{k} \ni (\gamma,x) \mapsto T_\gamma f(x)$ is continuous.
\end{proposition}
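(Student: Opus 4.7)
My plan is to prove everything for $T_\gamma$ first, and then transfer to $R_\gamma$ via the identity $R_\gamma f = T_\gamma(\gamma f)/\gamma$. For fixed $f \in \cC_\omega(\bR^k)$, the proof splits into two separate tasks: (i) establishing the $\omega$-norm bound $\|T_\gamma f\|_\omega < \infty$, and (ii) establishing joint continuity of $(\gamma, x) \mapsto T_\gamma f(x)$ on $(-\infty, 0) \times \bR^k$ (which will in particular give continuity in $x$ for each fixed $\gamma$). Continuity plus $\omega$-boundedness yields membership in $\cC_\omega(\bR^k)$.

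For (i), I would combine $|f(y)| \leq \|f\|_\omega(1+\omega(y))$ with the growth estimates in \eqref{as:rsc:A.4} to obtain, for every $x \in \bR^k$ and $h \in U$,
\[
\gamma F(x,h,W_0) + f(G(x,W_0)) \leq |\gamma|\bigl(a_2(W_0)+b_2\omega(x)\bigr) + \|f\|_\omega\bigl(1 + a_1(W_0) + b_1\omega(x)\bigr).
\]
Taking $\ln \bE[\cdot]$ and then $\inf_{h \in U}$, and using Hölder's inequality together with the moment condition \eqref{eq:assumpt.kpm} (finiteness of $\mu^\gamma(a_i(W_0))$ for every real $\gamma$) to control the resulting $W_0$-expectation, I obtain $T_\gamma f(x) \leq C_+(1+\omega(x))$ for some finite $C_+$. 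For the lower bound, I would apply Jensen's inequality to $\ln \bE[e^{\cdot}]$, which produces $T_\gamma f(x) \geq \inf_h \bE[\gamma F(x,h,W_0) + f(G(x,W_0))] \geq -C_-(1+\omega(x))$, finite because $\bE a_i(W_0) < \infty$ under \eqref{eq:assumpt.kpm}. Together these give the required $\omega$-bound uniformly in $h \in U$.

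For (ii), I would fix $(\gamma_0, x_0) \in (-\infty,0) \times \bR^k$ and a sequence $(\gamma_n, x_n) \to (\gamma_0, x_0)$ and consider the kernel
\[
\Phi(\gamma',x',h) := \bE\!\left[\exp\!\bigl(\gamma' F(x',h,W_0) + f(G(x',W_0))\bigr)\right].
\]
By continuity of $F$ in $(x,h)$, continuity of $G$ and $f$ in $x$, and continuity of $z \mapsto e^z$, the integrand converges pointwise in $W_0$ along any convergent sequence $(\gamma_n, x_n, h_n) \to (\gamma_0, x_0, h_0)$. To invoke dominated convergence I would fix a compact neighbourhood of $(\gamma_0, x_0)$ on which $|\gamma'| \leq M$ and $\omega(x') \leq R$ (using continuity of $\omega$), and use the pointwise bound from step (i) to produce a dominating function of the form $\exp\bigl(MC + \|f\|_\omega + (M + \|f\|_\omega)(a_1(W_0)+a_2(W_0))\bigr)$, which is integrable by Hölder's inequality and \eqref{eq:assumpt.kpm}. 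This yields joint continuity of $\Phi$ on $(-\infty,0)\times\bR^k \times U$. Taking the infimum over the compact set $U$ and invoking the standard fact that the infimum of a jointly continuous function over a compact parameter set depends continuously on the remaining variables, I conclude that $(\gamma', x') \mapsto T_{\gamma'} f(x') = \inf_{h\in U} \ln \Phi(\gamma',x',h)$ is continuous.

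The main obstacle is producing a single integrable dominating function that works uniformly over the joint variation in $(\gamma', x', h)$, not merely pointwise; this is exactly where the full strength of \eqref{eq:assumpt.kpm} (all entropic moments finite, not just finite expectation) becomes essential, and it is why Hölder must be applied with care to separate the $a_1$ and $a_2$ exponential contributions.
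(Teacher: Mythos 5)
Your proof is correct and follows essentially the same route as the paper: the $\omega$-bound comes from combining $|f|\leq\|f\|_{\omega}(1+\omega(\cdot))$ with the growth conditions in \eqref{as:rsc:A.4} and the moment condition \eqref{eq:assumpt.kpm}, and continuity comes from dominated convergence with a dominating function built from $a_1(W_0),a_2(W_0)$ on a neighbourhood where $\gamma$ and $\omega(x)$ are bounded. The only cosmetic differences are that the paper works with $R_\gamma$ first and handles the optimization over the compact set $U$ via an argmax selector rather than your (equally valid) appeal to continuity of parametric infima over a compact set.
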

\begin{proof}
We will only show the proof for $R_{\gamma}$, as the proof for $T_{\gamma}$ is analogous. Let $f\in\cC_{\omega}(\bR^{k})$ and $\gamma<0$. We know
that there exists $M>1$, such that for all $x\in\bR^{k}$, we get $|f(x)|\leq
M(\omega(x)+1)$.

First, let us prove that $\|R_{\gamma}f\|_{\omega}$ is finite. Using the fact that $\mu^{\gamma}$ is monotone and translation invariant as well
as
\eqref{as:rsc:A.4}, for any $x\in\bR^{k}$, we get
\begin{align*}
R_{\gamma}f(x) &\leq \mu^{\gamma}(a_2(W_0)+b_2\omega(x)+M(\omega(G(x,W_0))+1))\\
& \leq \mu^{\gamma}(a_2(W_0)+b_2\omega(x)+Ma_1(W_0)+Mb_1\omega(x)+M)\\
& = (b_2+Mb_1)\omega(x)+\mu^{\gamma}(a_2(W_0)+Ma_1(W_0))+M,
\end{align*}
as well as
\[
R_{\gamma}f(x) \geq -(b_2+Mb_1)\omega(x)+\mu^{\gamma}(-a_2(W_0)-Ma_1(W_0))-M.
\]
Consequently, noting that $R_{\gamma}f\in \cC_{\omega'}(\bR^{k})$ for
\[
\omega'(x)=(b_2+Mb_1)\omega(x)+|\mu^{\gamma}(a_2(W_0)+Ma_1(W_0))|+|\mu^{\gamma}(-a_2(W_0)-Ma_1(W_0))|+M,
\]
and using \eqref{eq:omega'}, we conclude that $\|R_{\gamma}f\|_{
\omega}$ is finite.

Second, let us prove that the mapping $(-\infty,0)\times \bR^{k} \ni (\gamma,x) \mapsto R_{\gamma}f(x)$ is continuous. Let $\{(\gamma_n,x_{n},h_n)\}_{n\in\bN}$ be a sequence such that $\gamma_n<0$ $x_n\in\bR^{k}$, $h_{n}\in U$ and
$(\gamma_n, x_{n},h_n)\to (\gamma, x,h)$, where $\gamma<0$, $x\in\bR^{k}$ and $h\in U$. By \eqref{as:rsc:A.2} and \eqref{as:rsc:A.3} we know that
\[e^{\gamma_n [F(x_n,h_n,W_0)+f(G(x_n,W_0))]}\stackrel{a.s.}{\longrightarrow} e^{\gamma [F(x,h,W_0)+f(G(x,W_0))]}.\]
As the weight function $\omega$ is continuous and finite-valued, we know that $y:=\sup_{n\in\bN}\omega(x_{n})<\infty$.
Moreover, using \eqref{as:rsc:A.4}, we get
\[
0\leq e^{\gamma_n [F(x_n,h_n,W_0)+f(G(x_n,W_0))]}\leq e^{\gamma_0 [a_2(W_0)+Ma_1(W_0)+(b_2+Mb_1)y+M]}
\]
with $\gamma_0$ such that for any $n$ we have $\gamma_n \leq \gamma_0$.
Noting that $e^{\gamma_0 [a_2(W_0)+Ma_1(W_0)+(b_2+Mb_1)y+M]}\in L^{1}$, by dominated convergence theorem,
\[\bE[e^{\gamma_n [F(x_n,h_n,W_0)+f(G(x_n,W_0))]}]\to \bE[e^{\gamma [F(x,h,W_0)+f(G(x,W_0))]}],\]
and consequently
\[\mu^{\gamma_n}(F(x_n,h_n,W_0)+f(G(x_n,W_0)))\to \mu^{\gamma}(F(x,h,W_0)+f(G(x,W_0))).\]
Let $h_{z}^\gamma := \argmax_{h\in U} \mu^{\gamma}(F(z,h,W_0)+f(G(z,W_0)))$, for any $z\in U$ (note that $U$ is compact). Due to continuity of the
function
$(\gamma, x,h)\mapsto  \mu^{\gamma}(F(x,h,W_0)+f(G(x,W_0)))$, we also know that
\[\mu^{\gamma_n}(F(x_n,h_{x_{n}}^{\gamma_n},W_0)+f(G(x_n,W_0)))\to \mu^{\gamma}(F(x,h_{x}^\gamma,W_0)+f(G(x,W_0))),\]
which imply continuity of $(\gamma,x) \to R_{\gamma}f(x)$.
\end{proof}

We are now ready to formulate the main result of this paper.
\begin{theorem}\label{th:RSC.contraction}
Let $\gamma<0$. Under assumptions \eqref{as:rsc:A.1}--\eqref{as:rsc:A.5}, for sufficiently small
$\beta>0$,
the operator $T_{\gamma}$ is a local contraction under $\|\cdot\|_{\beta,\omega\textrm{-span}}$, i.e. there exist functions $\beta: \bR_{+}\to (0,1)$ and $L: \bR_{+} \to (0,1)$ such that
\[
\| T_{\gamma}f_1-T_{\gamma}f_2\|_{\beta(M),\omega\textrm{-span}}\leq L(M)\|f_1-f_2\|_{\beta(M),\omega\textrm{-span}},
\]
for $f_1,f_2\in\cC_{\omega}(\bR^{k})$, such that $\| f_1\|_{\omega\textrm{-span}}\leq M$ and $\| f_2 \|_{\omega\textrm{-span}}\leq M$.
\end{theorem}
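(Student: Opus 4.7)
The plan is to follow a Hairer--Mattingly-style argument: linearise the nonlinear Bellman operator $T_{\gamma}$ along a straight line between $f_{1}$ and $f_{2}$, and then combine the geometric drift \eqref{as:rsc:A.4} with the local minorisation \eqref{as:rsc:A.5} to produce a Dobrushin-type estimate for the resulting tilted kernels, measured in the weighted total variation norm.

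\emph{Linearisation step.} Fix $f_{1}, f_{2} \in \cC_{\omega}(\bR^{k})$ with $\|f_{i}\|_{\omega\text{-span}} \le M$, set $f_{t} := (1-t) f_{2} + t f_{1}$ and $g := f_{1} - f_{2}$. Writing $L(f,h,x) := \ln \bE[\exp(\gamma F(x,h,W_{0}) + f(G(x,W_{0})))]$, the convexity and smoothness of $L$ in $f$ give
\[
L(f_{1},h,x) - L(f_{2},h,x) = \int_{0}^{1} \int_{\bR^{k}} g(z)\, \nu_{x,h,t}(dz)\, dt,
\]
where $\nu_{x,h,t}$ is the law of $G(x,W_{0})$ tilted by $\exp(\gamma F(x,h,W_{0}) + f_{t}(G(x,W_{0})))$ (normalised). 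Using compactness of $U$, a measurable selection theorem and the continuity established in Proposition~\ref{pr:RSC.feller}, I pick a measurable minimiser $h^{\ast}_{x,t}$ at each $(x,t)$; Danskin's theorem then gives $\tfrac{d}{dt} T_{\gamma} f_{t}(x) = \int g\, d\nu_{x,t}$ for a.e.\ $t$, where $\nu_{x,t} := \nu_{x, h^{\ast}_{x,t}, t}$. Subtracting the corresponding identity at $y$,
\[
(T_{\gamma} f_{1} - T_{\gamma} f_{2})(x) - (T_{\gamma} f_{1} - T_{\gamma} f_{2})(y) = \int_{0}^{1} \int_{\bR^{k}} g(z)\, (\nu_{x,t} - \nu_{y,t})(dz)\, dt.
\]
Since $\nu_{x,t} - \nu_{y,t}$ has zero total mass, the right-hand side is unchanged when $g$ is replaced by any translate, so by Proposition~\ref{pr:omega} (items 1 and 5, applied to the $\beta$-weighted norm) I may choose a centred $\tilde g$ with $\|\tilde g\|_{\beta,\omega} = \|g\|_{\beta,\omega\text{-span}}$ and conclude
\[
\bigl|(T_{\gamma} f_{1} - T_{\gamma} f_{2})(x) - (T_{\gamma} f_{1} - T_{\gamma} f_{2})(y)\bigr| \le \|g\|_{\beta,\omega\text{-span}} \sup_{t \in [0,1]} \|\nu_{x,t} - \nu_{y,t}\|_{\beta,\omega\text{-var}}.
\]

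\emph{Kernel estimate.} It remains to exhibit $\beta(M), L(M) \in (0,1)$ such that
\[
\|\nu_{x,t} - \nu_{y,t}\|_{\beta,\omega\text{-var}} \le L(M)\bigl(2 + \beta\omega(x) + \beta\omega(y)\bigr)
\]
uniformly in $t \in [0,1]$ and in the minimising selection. I would split on the sublevel set $C_{R} := \{\omega \le R\}$ with $R = R(M)$ to be tuned at the end. On $C_{R} \times C_{R}$ the tilt factor $\exp(\gamma F + f_{t} \circ G)$ has upper and lower bounds depending only on $M$ and $R$ on an event of controlled probability, using \eqref{eq:assumpt.rsc.G}, \eqref{eq:assumpt.rsc} and the entropic-moment hypothesis \eqref{eq:assumpt.kpm}; this lets the raw minorisation \eqref{as:rsc:A.5} pass to the tilted kernel in the form $\nu_{x,t} \ge c'(M,R)\,\tilde\nu$ for a common probability $\tilde\nu$ and some $c'(M,R) > 0$. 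A standard coupling estimate gives $\|\nu_{x,t} - \nu_{y,t}\|_{\text{var}} \le 2(1 - c'(M,R))$, and the $\beta,\omega$-weighted version follows from the uniform $\omega$-integrability of $\nu_{x,t}$ on $C_{R}$. When $\omega(x) + \omega(y) > 2R$, the trivial bound $\|\nu_{x,t}-\nu_{y,t}\|_{\beta,\omega\text{-var}} \le \int(1+\beta\omega)(d\nu_{x,t} + d\nu_{y,t})$, together with a drift estimate $\int \omega\, d\nu_{x,t} \le b_{1} \omega(x) + K(M)$ obtained from \eqref{eq:assumpt.rsc.G} and \eqref{eq:assumpt.kpm}, yields an upper bound of the form $2 + \beta b_{1}(\omega(x)+\omega(y)) + \beta K'(M)$. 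Since $b_{1} < 1$, taking first $R$ large compared to $K'(M)/(1-b_{1})$ and then $\beta = \beta(M)$ sufficiently small renders the ratio against $2 + \beta\omega(x) + \beta\omega(y)$ strictly less than $1$; matching this factor to the ``interior'' bound $2(1-c'(M,R))$ then produces the desired $L(M)$.

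\emph{Expected obstacle.} The delicate part is the first case of the kernel estimate: transferring \eqref{as:rsc:A.5} from the raw kernel $\bP[G(x,W_{0}) \in \cdot]$ to the tilted kernel $\nu_{x,t}$. The tilt depends on the unknown minimiser $h^{\ast}_{x,t}$ and on the $\omega$-unbounded function $f_{t}$, so a naïve bound would drive the effective minorisation constant $c'(M,R)$ to $0$. The entropic-moment condition \eqref{eq:assumpt.kpm} is precisely what allows Chebyshev-style control of $a_{1}(W_{0}), a_{2}(W_{0})$ on an event of probability close to $1$, keeping the tilt uniformly bounded above and below on that event; carefully tracking how all of the resulting constants scale with $M, R$, and then balancing them with $\beta(M)$, is where the bulk of the technical work lies.
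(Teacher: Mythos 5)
Your overall architecture coincides with the paper's: reduce the span-norm increment of $T_{\gamma}f_{1}-T_{\gamma}f_{2}$ to a weighted total-variation bound on a difference of Esscher-tilted kernels, split that norm into an unweighted variation part plus a $\beta$-weighted drift part, and then treat the region $\omega(x)+\omega(y)>R$ with the geometric drift $b_{1}<1$ and the region $\omega(x)+\omega(y)\le R$ with the minorisation \eqref{as:rsc:A.5}, finally tuning $R$ and then $\beta$. Two remarks on where you diverge. First, your linearisation along $f_{t}=(1-t)f_{2}+tf_{1}$ with Danskin's theorem is a legitimate but heavier route to the kernel-difference inequality: you must justify a.e.\ differentiability of $t\mapsto T_{\gamma}f_{t}(x)$ through a possibly non-unique minimiser and a measurable selection. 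The paper gets the same inequality (Lemma \ref{lem:1}) with no differentiation at all, by the elementary two-sided trick of evaluating the $\inf_{h}$ at the \emph{other} function's minimiser $h_{(x,g)}$ and the $\sup_{\bQ}$ in the dual entropic representation at the maximiser for $f$, which yields $T_{\gamma}f(x)-T_{\gamma}g(x)\le\int(f-g)\,d\bar{\bQ}_{(x,f,h_{(x,g)})}$ directly; your zero-mass centering step via Proposition \ref{pr:omega} is then identical to the paper's.

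The genuine gap is in your ``interior'' kernel estimate. You claim that bounding the tilt $e^{\gamma F+f_{t}\circ G}$ above and below on an event of probability close to $1$ lets \eqref{as:rsc:A.5} pass to the tilted kernel in the form $\nu_{x,t}\ge c'(M,R)\,\tilde\nu$ for a common probability measure $\tilde\nu$. This does not follow: restricting to a good event $B$ gives only $\nu_{x,t}(A)\ge \mathrm{const}\cdot\big(c\nu(A)-\bP[B^{c}]\big)$, and the additive error $\bP[B^{c}]$ cannot be absorbed into $c\nu(A)$ uniformly over Borel sets $A$ with small $\nu(A)$; moreover the subtracted measure $\bP[\{G(x,W_{0})\in\cdot\}\cap B^{c}]$ depends on $x$, so no common minorising measure emerges. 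The conclusion you want, $\|\nu_{x,t}-\nu_{y,t}\|_{\textrm{var}}\le 2(1-c')$, is still reachable, but via a Dobrushin-type bound $\inf_{A}\big[\nu_{x,t}(A)+\nu_{y,t}(A^{c})\big]>0$ rather than a common minorisation; the paper instead uses the Cauchy--Schwarz inequality to get the pointwise lower bound $\bar{\bQ}_{(x,f,h)}(A)\ge \bP[G(x,W_{0})\in A]^{2}/C(M,R,\gamma)$ (see \eqref{eq:rsc2:schwarz}), which handles the unbounded tilt without any truncation, and then concludes $\sup_{(x,y)\in C_{R}}\|\bH^{f,g}_{x,y}\|_{\textrm{var}}<2$ by a contradiction argument. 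You correctly identified this transfer as the delicate point, but the fix you propose is not valid as stated and needs to be replaced by one of these two arguments. A smaller inaccuracy: the drift constant for the tilted kernel cannot be kept at $b_{1}$; the tilting degrades it, which is why the paper proves $\int\omega\,d\bar{\bQ}_{(x,f,h)}\le\phi\omega(x)+\alpha_{\phi}$ only for $\phi\in(b_{1},1)$ --- harmless for the final balancing, but worth stating correctly.
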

The proof of Theorem~\ref{th:RSC.contraction} will be split into three lemmas which we will now formulate and prove. Before we do this, let us introduce some helpful notation.

Let $(\Omega,\cF_{1},\bP_{1})$ be a probability space which corresponds to random variable $W_0$. For any
$f\in\cC_{\omega}(\bR^{k})$, $x\in \bR^{k}$ and $h\in U$ we will use the following notation
\begin{align}
h_{(x,f)} & :=\gamma\argmax_{h\in U}\mu^{\gamma}(F(x,h,W_0)+\frac{1}{\gamma}f(G(x,W_0)))\nonumber\\
&\phantom{:}=\argmin_{h\in U}\ln \bE[e^{\gamma F(x,h,W_0)+f(G(x,W_0))}],\label{eq:rsc:hy}\\
\bQ_{(x,f,h)} & := \gamma\argmin_{\bQ\in\cM_{1}}\Big[\bE_{\bQ}[F(x,h,W_0)+\frac{1}{\gamma}f(G(x,W_0))]-\frac{1}{\gamma}H[\bQ\|\bP_{1}]\Big]\nonumber\\
& \phantom{:}= \argmax_{\bQ\in\cM_{1}}\Big[\bE_{\bQ}[\gamma F(x,h,W_0)+f(G(x,W_0))]-H[\bQ\|
\bP_{1}]\Big]\label{eq:rsc:qy},
\end{align}
where $\cM_{1}:=\cM_{1}(\Omega,\cF_{1})$ denote the set of all probability measures on $(\Omega,\cF_{1})$ and $H[\bQ\|\bP_{1}]$ is the relative entropy of $\bQ$ wrt. $\bP_{1}$, i.e.
\[
H[\bQ\| \bP_{1}]:=
\begin{cases}
\bE_{\bQ}[\ln\frac{\d\bQ}{\d\bP_{1}}] & \textrm{if}\ \bQ\ll\bP_{1},\\
+\infty &\textrm{otherwise}.
\end{cases}
\]
Objects defined in \eqref{eq:rsc:hy} and \eqref{eq:rsc:qy} might be non-unique in the sense that $\argmin$ (or $\argmax$) might define a set, rather than a single element. Nevertheless, with slight abuse of notation, we take any fixed maximizer of \eqref{eq:rsc:hy} and assume that $h_{x,f}\in U$. To have a unique representation of measure $\bQ_{(x,f,h)}$, we use so called {\it Esscher transformation} \cite{Ger1979}. Before we write the explicit form of $\bQ_{(x,f,h)}$, let us give a more specific comment.
The measure $\bQ_{(x,f,h)}$ corresponds to the minimizing scenario in the robust (dual) representation of the entropic utility $\mu^{\gamma}$. Indeed (see e.g. \cite{PraMenRun1996}), for any $Z\in L^{0}(\Omega,\cF_{1},\bP_{1})$, such that $\gamma Ze^{\gamma Z}\in L^{1}(\Omega,\cF_{1},\bP_{1})$, we get
\begin{equation}\label{eq:robust.representation}
\mu^{\gamma}(Z)=\inf_{\bQ\in\cM_{1}}\Big[\bE_{\bQ}Z-\frac{1}{\gamma}H[\bQ\|\bP_{1}]\Big].
\end{equation}
To show that\[
Z=F(x,h,W_0)+\frac{1}{\gamma}f(G(x,W_0))
\]
is such that $\gamma Ze^{\gamma Z}\in L^{1}(\Omega,\cF_{1},\bP_{1})$, it is enough to note that $\|f\|_{\omega}<\infty$ and use \eqref{as:rsc:A.4}. Then, we get
\[
Z\in L^{1}(\Omega,\cF_1,\bP_1)\quad\textrm{and}\quad e^{2\gamma Z}\in L^{1}(\Omega,\cF_1,\bP_1),
\]
which combined with the fact that for any $\gamma<0$ we get
\[
|\gamma Ze^{\gamma Z}|\leq \1_{\{\gamma Z \leq 0\}} |\gamma Z|+\1_{\{\gamma Z> 0\}}|e^{2\gamma Z}|,
\]
concludes the proof.
Then, as shown in \cite[Proposition 2.3]{PraMenRun1996}, we could define the minimizer of \eqref{eq:rsc:qy} through  {\it Esscher transformation} of $Z$, i.e. the measure $\bQ_{(x,f,h)}$ given by
\begin{equation}\label{eq:rsc:essher}
\bQ_{(x,f,h)}(dw)=\frac{e^{\gamma F(x,h,w)+f(G(x,w))}\bP_{1}(dw)}{\bE[e^{\gamma F(x,h,W_0)+f(G(x,W_0))}]}.
\end{equation}
We will also define the measure $\bar{\bQ}_{(x,f,h)}$ on $\bR^{k}$, by
\begin{equation}\label{eq:rsc:essher2}
\bar{\bQ}_{(x,f,h)}(A)=\frac{\bE\big[\1_{\set{G(x,W_0)\in
A}}e^{\gamma F(x,h,W_0)+f(G(x,W_0))}\big]}{\bE[e^{\gamma F(x,h,W_0)+f(G(x,W_0))}]},\quad
A\in\cB(\bR^{k}).
\end{equation}
Finally, for any $f,g\in\cC_{\omega}(\bR^{k})$ and $x,y\in\bR^{k}$ we shall write
\begin{equation}\label{eq:Hfgxy}
\bH^{f,g}_{x,y}:=\bar{\bQ}_{(x,f,h_{(x,g)})}-\bar{\bQ}_{(y,g,h_{(y,f)})}.
\end{equation}

We are now ready to introduce Lemma \ref{lem:1}, Lemma \ref{lem:2} and Lemma \ref{lem:3}.
\begin{lemma}\label{lem:1}
Let $\gamma<0$. Under assumptions \eqref{as:rsc:A.1}--\eqref{as:rsc:A.4}, we get
\begin{equation}\label{eq:w.3}
T_{\gamma}f(x)-T_{\gamma}g(x)-(T_{\gamma}f(y)-T_{\gamma}g(y))  \leq
\|f-g\|_{\beta,\omega\textrm{-span}}\|\bH^{f,g}_{x,y} \|_{\beta,\omega\textrm{-var}},
\end{equation}
 for any $f,g\in\cC_{\omega}(\bR^{k})$, $x,y\in\bR^{k}$ and $\beta>0$.
\end{lemma}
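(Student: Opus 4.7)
The plan is to reduce the left-hand side of \eqref{eq:w.3} to an integral of $f-g$ against the signed measure $\bH^{f,g}_{x,y}$, and then invoke Proposition \ref{pr:omega}(4) together with the duality between the weighted sup- and variation norms to conclude. The central tool is the Donsker--Varadhan variational representation
\[
\ln\bE\bigl[e^{Y}\bigr]=\sup_{\bQ\ll\bP_{1}}\bigl(\bE_{\bQ}[Y]-H[\bQ\,\|\,\bP_{1}]\bigr),
\]
attained uniquely at the Esscher transform $\d\bQ^{*}/\d\bP_{1}=e^{Y}/\bE[e^{Y}]$. Applied with $Y=\gamma F(x,h,W_{0})+f(G(x,W_{0}))$ and $h$ fixed, this identifies the maximizing measure with $\bQ_{(x,f,h)}$ of \eqref{eq:rsc:essher}, and the finiteness conditions needed for the representation follow from assumption \eqref{as:rsc:A.4} exactly as in the remark preceding \eqref{eq:rsc:essher}.

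The first step, carried out at the point $x$, is a one-sided sandwich. Writing $\phi_{h}(\cdot):=\ln\bE[e^{\gamma F(x,h,W_{0})+\cdot(G(x,W_{0}))}]$, optimality of $h_{(x,g)}$ for $g$ and its suboptimality for $f$ yield $T_\gamma g(x)=\phi_{h_{(x,g)}}(g)$ and $T_\gamma f(x)\le \phi_{h_{(x,g)}}(f)$. I evaluate $\phi_{h_{(x,g)}}(f)$ via the variational identity at its own maximizer $\bQ_{(x,f,h_{(x,g)})}$, and lower-bound $\phi_{h_{(x,g)}}(g)$ by the value of the same variational functional at the \emph{same} measure; the $\gamma F$ contribution and the relative-entropy penalty cancel identically, leaving
\[
T_\gamma f(x)-T_\gamma g(x)\le \int_{\bR^{k}}\bigl(f(z)-g(z)\bigr)\,\bar\bQ_{(x,f,h_{(x,g)})}(dz).
\]
The symmetric argument at $y$, exchanging the roles of $f$ and $g$, produces the matching lower bound
\[
T_\gamma f(y)-T_\gamma g(y)\ge \int_{\bR^{k}}\bigl(f(z)-g(z)\bigr)\,\bar\bQ_{(y,g,h_{(y,f)})}(dz).
\]
Subtracting the two inequalities collapses the left-hand side of \eqref{eq:w.3} to $\int(f-g)\,d\bH^{f,g}_{x,y}$.

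Finally, $\bH^{f,g}_{x,y}$ is a difference of two probability measures, so it has total mass zero, which means the integral is unchanged when $f-g$ is replaced by $f-g+c$ for any $c\in\bR$. Applying Proposition \ref{pr:omega}(4) to the weight function $\beta\omega$ furnishes some $c$ with $\|f-g+c\|_{\beta,\omega}=\|f-g\|_{\beta,\omega\textrm{-span}}$, and the defining duality of the weighted total variation norm then yields
\[
\int\bigl(f(z)-g(z)+c\bigr)\,\bH^{f,g}_{x,y}(dz)\le \|f-g+c\|_{\beta,\omega}\,\|\bH^{f,g}_{x,y}\|_{\beta,\omega\textrm{-var}},
\]
which is precisely \eqref{eq:w.3}. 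The only step requiring care is the cancellation: plugging the ``wrong'' optimizer into one of $\phi_{h_{(x,g)}}(f)$ or $\phi_{h_{(x,g)}}(g)$ is exactly what makes both the $\gamma F$ term and the entropy penalty drop out, leaving only the linear difference $f-g$; once this is written out cleanly, the remainder is bookkeeping.
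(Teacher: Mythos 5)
Your proof is correct and follows essentially the same route as the paper's: the same one-sided sandwich with the suboptimal control $h_{(x,g)}$ and the Esscher measure $\bQ_{(x,f,h_{(x,g)})}$ from the dual representation of $\mu^{\gamma}$, the same cancellation of the $\gamma F$ and entropy terms, and the same reduction to $\int (f-g)\,d\bH^{f,g}_{x,y}$. The only (immaterial) difference is in the last step, where you invoke Proposition~\ref{pr:omega}(4) together with the dual characterization $\|\bH\|_{\beta,\omega\textrm{-var}}=\sup_{\|\varphi\|_{\beta,\omega}\leq 1}\int\varphi\,d\bH$, while the paper splits the integral over a Hahn--Jordan positive set and uses the centering constant $c_0$ from Proposition~\ref{pr:omega}(5); both yield \eqref{eq:w.4}.
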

\begin{proof}
Let $f,g\in\cC_{\omega}(\bR^{k})$, $x,y\in\bR^{k}$ and let $\beta>0$. Using \eqref{eq:rsc:hy} we get
\begin{align}
T_{\gamma}f(x) &= \gamma\sup_{h\in U}\mu^{\gamma}(F(x,h,W_0)+\frac{1}{\gamma}f(G(x,W_0)))\nonumber\\
&\leq \gamma \mu^{\gamma}(F(x,h_{(x,g)},W_0)+\frac{1}{\gamma}f(G(x,W_0)))\nonumber\\
&=  \sup_{\bQ\in\cM_{1}(\bP_{1})} \Big[\bE_{\bQ}[\gamma F(x,h_{(x,g)},W_0)+f(G(x,W_0))]-H[\bQ\|
\bP_{1}]\Big]\nonumber\\
&= \bE_{\bQ_{(x,f,h_{(x,g)})}}\left[ \gamma F(x,h_{(x,g)},W_0)+f(G(x,W_0))\right] -H[\bQ_{(x,f,h_{(x,g)})}\| \bP_{1}]\label{eq:rsc:tf}
\end{align}

Now, using \eqref{eq:rsc:qy} we get
\begin{align}
T_{\gamma}g(x)& =\gamma \sup_{h\in U}\mu^{\gamma}(F(x,h,W_0)+\frac{1}{\gamma}g(G(x,W_0)))\nonumber\\
& = \gamma\mu^{\gamma}(F(x,h_{(x,g)},W_0)+\frac{1}{\gamma}g(G(x,W_0)))\nonumber\\
 &= \sup_{\bQ\in\cM_{1}(\bP_{1})}\Big[\bE_{\bQ}[\gamma F(x,h_{(x,g)},W_0)+g(G(x,W_0))]-H[\bQ\|
 \bP_{1}]\Big]\nonumber\\
 &\geq \bE_{\bQ_{(x,f,h_{(x,g)})}}\left[\gamma F(x,h_{(x,g)},W_0)+g(G(x,W_0))\right]-H[\bQ_{(x,f,h_{(x,g)})}\| \bP_{1}]
\label{eq:rsc:tg}
\end{align}
Combining \eqref{eq:rsc:tf} and \eqref{eq:rsc:tg} we get
\begin{align}
T_{\gamma}f(x)-T_{\gamma}g(x) & \leq \bE_{\bQ_{(x,f,h_{(x,g)})}}[f(G(x,W_0))-g(G(x,W_0))]\nonumber\\
& \leq \int_{\bR^{k}} [f(z)-g(z)]\bar{\bQ}_{(x,f,h_{(x,g)})}(dz).\label{eq:rsc:tfx1tfx2}
\end{align}
Switching $f$ with $g$ in \eqref{eq:rsc:tfx1tfx2}, and doing similar computations for $y\in\bR^{k}$, we get
\begin{equation}\label{eq:rsc:tfx2tfx1}
T_{\gamma}g(y)-T_{\gamma}f(y) \leq  \int_{\bR^{k}} [g(z)-f(z)]\bar{\bQ}_{(y,g,h_{(y,f)})}(dz)
\end{equation}
Combining \eqref{eq:rsc:tfx1tfx2} with \eqref{eq:rsc:tfx2tfx1} and recalling notation \eqref{eq:Hfgxy}, we get
\begin{equation}\label{eq:main1}
T_{\gamma}f(x)-T_{\gamma}g(x)-(T_{\gamma}f(y)-T_{\gamma}g(y)) \leq \int_{\bR^{k}}
\big[f(z)-g(z)\big]\bH^{f,g}_{x,y}(dz).
\end{equation}
We know that for any $c\in\bR$, we get
\[
\int_{\bR^{k}} \big[f(z)-g(z)\big]\bH^{f,g}_{x,y}(dz)=\int_{\bR^{k}} \frac{f(z)-g(z)+c}{1+\beta\omega(z)}(1+\beta\omega(z))\bH^{f,g}_{x,y}(dz).
\]
Let $A\subset \bR^{k}$ denote a positive set for a signed measure $\bH^{f,g}_{x,y}$ (obtained e.g. using Hahn-Jordan decomposition) and for any $c\in\bR$ let
\[
a_{+}(c):=\sup_{z\in\bR^{k}}\frac{f(z)-g(z)+c}{1+\beta\omega(z)}\quad\textrm{and}\quad
a_{-}(c):=-\inf_{z\in\bR^{k}}\frac{f(z)-g(z)+c}{1+\beta\omega(z)}.
\]
Then, for any $c\in\bR$, we get
\begin{equation}\label{eq:w.1}
\int_{\bR^{k}} \big[f(z)-g(z)\big]\bH^{f,g}_{x,y}(dz) \leq a_{+}(c)\int_{A}(1+\beta\omega(z))\bH^{f,g}_{x,y}(dz)-a_{-}(c)\int_{A^{c}}(1+\beta\omega(z))\bH^{f,g}_{x,y}(dz).
\end{equation}
From Proposition~\ref{pr:omega} we know that there exists $c_0\in\bR$, such that \[
a_{+}(c_0)=a_{-}(c_0)=\|f-g\|_{\beta,\omega\textrm{-span}}.
\]
Thus, from \eqref{eq:w.1} we get
\begin{equation}\label{eq:w.4}
\int_{\bR^{k}} \big[f(z)-g(z)\big]\bH^{f,g}_{x,y}(dz) \leq \|f-g\|_{\beta,\omega\textrm{-span}}\|\bH^{f,g}_{x,y}\|_{\beta,\omega\textrm{-var}},
\end{equation}
which together with \eqref{eq:main1} concludes the proof of \eqref{eq:w.3}.

\end{proof}

\begin{lemma}\label{lem:2}
Let $\gamma<0$. Under assumptions \eqref{as:rsc:A.1}--\eqref{as:rsc:A.4}, for any fixed $M>0$ and $\phi\in (b_1,1)$, there exists $\alpha_{\phi}>0$, such that
\begin{equation}\label{eq:beta.var.leq.var}
\|\bH^{f,g}_{x,y} \|_{\beta, \omega\textrm{-var}} \leq
\|\bH^{f,g}_{x,y} \|_{\textrm{var}}+\beta(\phi\omega(x)+\phi\omega(y)+2\alpha_{\phi}),
\end{equation}
for any $x,y\in\bR^{k}$ and $f,g\in\cC_{\omega}(\bR^{k})$ satisfying $\| f\|_{\omega\textrm{-span}}\leq M$ and  $\| g\|_{\omega\textrm{-span}}\leq M$.
\end{lemma}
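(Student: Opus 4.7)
The plan is to decompose the weighted total variation norm by definition,
\[
\|\bH^{f,g}_{x,y}\|_{\beta,\omega\textrm{-var}} = \int_{\bR^{k}}\bigl(1+\beta\omega(z)\bigr)|\bH^{f,g}_{x,y}|(dz) = \|\bH^{f,g}_{x,y}\|_{\textrm{var}} + \beta\int_{\bR^{k}}\omega(z)\,|\bH^{f,g}_{x,y}|(dz),
\]
so that the claim reduces to proving
\[
\int_{\bR^{k}}\omega(z)\,|\bH^{f,g}_{x,y}|(dz)\leq \phi\omega(x)+\phi\omega(y)+2\alpha_{\phi}.
\]
Since $\bH^{f,g}_{x,y}=\bar\bQ_{(x,f,h_{(x,g)})}-\bar\bQ_{(y,g,h_{(y,f)})}$ is the difference of two probability measures, its Hahn--Jordan parts are dominated by those probability measures respectively, so $|\bH^{f,g}_{x,y}|\leq \bar\bQ_{(x,f,h_{(x,g)})}+\bar\bQ_{(y,g,h_{(y,f)})}$ as measures. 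It therefore suffices to prove, for any $h\in U$ and any $f\in\cC_{\omega}(\bR^{k})$ with $\|f\|_{\omega\textrm{-span}}\leq M$, the one-sided bound $\int\omega\,d\bar\bQ_{(x,f,h)}\leq \phi\omega(x)+\alpha_{\phi}$.

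Before doing this I would reduce to the case $\|f\|_{\omega}\leq M$ by replacing $f$ with $f+c_{f}$, where $c_{f}$ is the centering constant supplied by Proposition~\ref{pr:omega}(5). Indeed, a constant shift in $f$ multiplies numerator and denominator of $\bar\bQ_{(x,f,h)}$ in \eqref{eq:rsc:essher2} by the same factor and thus does not change the measure, and by translation invariance of $\mu^{\gamma}$ the maximizer $h_{(x,g)}$ is also unaffected by a shift of $g$. Next, using $\int\omega\,d\bar\bQ_{(x,f,h)}=\bE_{\bQ_{(x,f,h)}}[\omega(G(x,W_0))]$ together with \eqref{eq:assumpt.rsc.G}, the estimate reduces to controlling
\[
\bE_{\bQ_{(x,f,h)}}[\omega(G(x,W_0))]\leq b_{1}\omega(x)+\bE_{\bQ_{(x,f,h)}}[a_{1}(W_0)].
\]

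The substantive step is to bound $\bE_{\bQ_{(x,f,h)}}[a_{1}(W_0)]$. The naive estimate via $d\bQ/d\bP_{1}=e^{Z}/\bE[e^{Z}]$ with $Z=\gamma F(x,h,W_0)+f(G(x,W_0))$ yields a factor growing exponentially in $\omega(x)$, which is too weak: this is the main obstacle. I circumvent it by convexity of the log-MGF. Setting $\psi(t):=\log\bE[e^{Z+ta_{1}(W_0)}]$, one has $\psi'(0)=\bE_{\bQ_{(x,f,h)}}[a_{1}(W_0)]$, and convexity gives $\psi'(0)\leq(\psi(t)-\psi(0))/t$ for every $t>0$, so
\[
\bE_{\bQ_{(x,f,h)}}[a_{1}(W_0)]\leq \frac{1}{t}\log\frac{\bE[e^{Z+ta_{1}(W_0)}]}{\bE[e^{Z}]}.
\]
Using the pointwise bounds $|\gamma F|\leq |\gamma|(a_{2}+b_{2}\omega(x))$ and $|f\circ G|\leq M(1+a_{1}+b_{1}\omega(x))$ from \eqref{as:rsc:A.4}, the numerator is at most $e^{(|\gamma|b_{2}+Mb_{1})\omega(x)+M}\bE[e^{|\gamma|a_{2}+(M+t)a_{1}}]$ and the denominator at least $e^{-(|\gamma|b_{2}+Mb_{1})\omega(x)-M}\bE[e^{-|\gamma|a_{2}-Ma_{1}}]$, both expectations being finite by \eqref{eq:assumpt.kpm}. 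The two $\omega(x)$-dependent exponentials combine into a prefactor $\exp\{2(|\gamma|b_{2}+Mb_{1})\omega(x)+2M\}$, and after taking $\log$ and dividing by $t$ one obtains the linear estimate
\[
\bE_{\bQ_{(x,f,h)}}[a_{1}(W_0)]\leq \frac{2(|\gamma|b_{2}+Mb_{1})}{t}\,\omega(x)+\frac{2M}{t}+\frac{1}{t}\log\frac{\bE[e^{|\gamma|a_{2}(W_0)+(M+t)a_{1}(W_0)}]}{\bE[e^{-|\gamma|a_{2}(W_0)-Ma_{1}(W_0)}]}.
\]

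To finish, given $\phi\in(b_{1},1)$ I choose $t=t_{\phi}:=2(|\gamma|b_{2}+Mb_{1})/(\phi-b_{1})$ so that the coefficient of $\omega(x)$ becomes $b_{1}+(\phi-b_{1})=\phi$, and set
\[
\alpha_{\phi}:=\frac{2M}{t_{\phi}}+\frac{1}{t_{\phi}}\log\frac{\bE[e^{|\gamma|a_{2}(W_0)+(M+t_{\phi})a_{1}(W_0)}]}{\bE[e^{-|\gamma|a_{2}(W_0)-Ma_{1}(W_0)}]},
\]
which is finite (and can be taken strictly positive) by \eqref{eq:assumpt.kpm}. This yields $\int\omega\,d\bar\bQ_{(x,f,h)}\leq\phi\omega(x)+\alpha_{\phi}$ uniformly over $h\in U$ and over $f$ with $\|f\|_{\omega\textrm{-span}}\leq M$; applying the same estimate symmetrically at $y$ for $g$, summing, and multiplying by $\beta$ delivers \eqref{eq:beta.var.leq.var}.
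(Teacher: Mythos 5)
Your proposal is correct. It shares the paper's skeleton: the same decomposition of $\|\bH^{f,g}_{x,y}\|_{\beta,\omega\textrm{-var}}$ into the unweighted variation plus $\beta\int\omega\,|\bH^{f,g}_{x,y}|(dz)$, the same domination $|\bH^{f,g}_{x,y}|\leq \bar\bQ_{(x,f,h_{(x,g)})}+\bar\bQ_{(y,g,h_{(y,f)})}$, and the same reduction to the one-sided drift bound $\int\omega\,d\bar\bQ_{(x,f,h)}\leq\phi\omega(x)+\alpha_\phi$ (the paper's inequality \eqref{eq:omegaK}), including the centering of $f$ via Proposition~\ref{pr:omega}. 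Where you genuinely diverge is in how that drift bound is established. The paper truncates on the event $\{\omega(G(x,W_0))-\phi\omega(x)>\alpha_\phi/2\}$, inserts the Cauchy--Schwarz inequality $1\leq\bE[e^{-Z}]\bE[e^{Z}]$, and converts the linear term into an exponential via $y<e^{y}$, which forces the somewhat opaque condition \eqref{eq:alpha.phi} on $\alpha_\phi$. You instead apply Jensen's inequality under the Esscher measure (equivalently, convexity of the cumulant generating function $t\mapsto\log\bE[e^{Z+ta_1(W_0)}]$) to get $\bE_{\bQ_{(x,f,h)}}[a_1(W_0)]\leq\frac{1}{t}\log\bigl(\bE[e^{Z+ta_1}]/\bE[e^{Z}]\bigr)$ and then optimize the tilt parameter $t$ so that the $\omega(x)$-coefficient lands exactly at $\phi$; notably your $t_\phi=2(Mb_1-\gamma b_2)/(\phi-b_1)$ is precisely the multiplier the paper introduces ad hoc, so your route explains where that constant comes from and yields a cleaner, explicit $\alpha_\phi$. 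The only point to tighten is the finiteness of $\bE[e^{|\gamma|a_2(W_0)+(M+t_\phi)a_1(W_0)}]$: condition \eqref{eq:assumpt.kpm} controls $a_1$ and $a_2$ separately, so you need one more Cauchy--Schwarz (or H\"older) step to handle the mixed exponential moment --- the same step the paper itself takes when it splits $\sqrt{\bE[e^{4(Mb_1-\gamma b_2)a_1/(\phi-b_1)}]}$ off from the rest, so this is a cosmetic rather than substantive gap.
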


\begin{proof}
For any $x,y\in\bR^{k}$ and $f,g\in\cC_{\omega}(\bR^{k})$ we get
\begin{align*}
\|\bH^{f,g}_{x,y} \|_{\beta, \omega\textrm{-var}} & =\int_{\bR^{k}}\big(1+\beta\omega(z)\big)|\bH^{f,g}_{x,y}|(dz)\\
& = \int_{\bR^{k}}|\bH^{f,g}_{x,y}|(dz)+\beta\int_{\bR^{k}}\omega(z)|\bH^{f,g}_{x,y}|(dz)\\
& \leq \|\bH^{f,g}_{x,y} \|_{\textrm{var}}+\beta\left(\int_{\bR^{k}}\omega(z)\bar{\bQ}_{(x,f,h_{(x,g)})}(dz)+\int_{\bR^{k}}\omega(z)\bar{\bQ}_{(y,g,h_{(y,f)})}(dz)\right).
\end{align*}
Thus, to prove \eqref{eq:beta.var.leq.var} it is sufficient to show that for any fixed $M>0$ and $\phi\in (b_1,1)$, there exists $\alpha_{\phi}>0$, such that
\begin{equation}\label{eq:omegaK}
\int_{\bR^{k}}\omega(z) \bar{\bQ}_{(x,f,h)}(dz)\leq \phi\omega(x)+\alpha_{\phi},
\end{equation}
for any $h\in U$, $x\in\bR^{k}$ and $f\in\cC_{\omega}(\bR^{k})$ satisfying $\| f\|_{\omega\textrm{-span}}\leq M$.

Let $M>0$ and $\phi\in (b_1,1)$. Using \eqref{eq:rsc:essher} and \eqref{eq:rsc:essher2} we get that \eqref{eq:omegaK} is equivalent to
\[
\bE\left[\left(\omega(G(x,W_0))-\phi\omega(x)\right)e^{\gamma F(x,h,W_0)+f(G(x,W_0))}\right] \leq
\alpha_{\phi}\bE\left[e^{\gamma F(x,h,W_0)+f(G(x,W_0))}\right].
\]
For simplicity let $Z:= \gamma F(x,h,W_0)+f(G(x,W_0))$. It is enough to prove that
\[
\bE\left[\1_{A}\left(\omega(G(x,W_0))-\phi\omega(x)\right)e^{Z}\right] \leq \frac{\alpha_{\phi}}{2}\bE\left[e^{Z}\right],
\]
where $A=\{\omega(G(x,W_0))-\phi\omega(x)>\frac{\alpha_{\phi}}{2}\}$, as the inequality
\[
\bE\left[\1_{A^{c}}\left(\omega(G(x,W_0))-\phi\omega(x)\right)e^{Z}\right] \leq \frac{\alpha_{\phi}}{2}\bE\left[e^{Z}\right]
\]
is trivial. Using Schwarz inequality we get $1\leq \bE[e^{-Z}]\bE[e^{Z}]$, so it is enough to show that
\begin{equation}\label{eq:7.K1}
\bE\left[\1_{A}\left(\omega(G(x,W_0))-\phi\omega(x)\right)e^{Z}\right] \bE\left[e^{-Z}\right]\leq \frac{\alpha_{\phi}}{2},
\end{equation}
Multiplying both sides of \eqref{eq:7.K1} by $\frac{2(Mb_1-\gamma b_2)}{(\phi-b_1)}$, using the fact that $y<e^{y}$ for any $y>0$, and inequality
$\frac{2Mb_1}{(\phi-b_1)}<\frac{2(Mb_1-\gamma b_2)}{(\phi-b_1)}$, to prove
\eqref{eq:omegaK}, it is sufficient to show that
\begin{equation}\label{eq:1111111}
\bE\left[e^{\frac{2(Mb_1-\gamma b_2)}{(\phi-b_1)}(\omega(G(x,W_0))-\phi\omega(x))}e^{Z}\right] \bE\left[e^{- Z}\right] \leq
\frac{\alpha_{\phi}Mb_1}{(\phi-b_1)}.
\end{equation}
Using \eqref{as:rsc:A.4} and Schwarz inequality we get
\begin{align*}
\bE\left[e^{\frac{2(Mb_1-\gamma b_2)}{(\phi-b_1)}(\omega(G(x,W_0))-\phi\omega(x))}e^{Z}\right] &
\leq \bE\left[e^{\frac{2(Mb_1-\gamma b_2)}{(\phi-b_1)} \left[a_1(W_0)-(\phi-b_1)\omega(x)\right]}e^{Z}\right]\\
& \leq e^{-2(Mb_1-\gamma b_2)\omega(x)}\bE\left[e^{\frac{2(Mb_1-\gamma b_2)}{(\phi-b_1)} a_1(W_0)}e^{Z}\right]\\
& \leq e^{-2(Mb_1-\gamma b_2)\omega(x)}\sqrt{\bE[e^{\frac{4(Mb_1-\gamma b_2)}{(\phi-b_1)} a_1(W_0)}]}\sqrt{\bE[e^{2Z}]},
\end{align*}
so instead of \eqref{eq:1111111} it is enough to show that
\begin{equation}\label{eq:111}
e^{-2(Mb_1-\gamma b_2)\omega(x)}\sqrt{\bE[e^{\frac{4(Mb_1-\gamma b_2)}{(\phi-b_1)} a_1(W_0)}]}\sqrt{\bE[e^{2Z}]}\bE\left[e^{-Z}\right] \leq \frac{\alpha_{\phi}Mb_1}{(\phi-b_1)}.
\end{equation}
Let us prove \eqref{eq:111}. Due to \eqref{as:rsc:A.4} we know that
\begin{equation}\label{eq:C1}
\sqrt{\bE[e^{\frac{4(Mb_1-\gamma b_2)}{(\phi-b_1)} a_1(W_0)}]}<\infty.
\end{equation}
On the other hand, from the fact that $\|f\|_{\omega\textrm{-span}}\leq M$, we know that there exists $a\in\bR$ such that $\| f+a \|_{\omega}\leq
M$.
Consequently, recalling that $Z=\gamma F(x,h,W_0)+f(G(x,W_0))$, using monotonicity of the exponent function  and \eqref{as:rsc:A.4}, we get
\begin{align}
\sqrt{\bE[e^{2Z}]} & = \sqrt{\bE[e^{2[\gamma F(x,h,W_0)+(f(G(x,W_0))+a)-a]}]}\nonumber \\
& \leq \sqrt{\bE[e^{2[-\gamma a_2(W_0)-\gamma b_{2}\omega(x)+M(a_1(W_0)+b_{1}\omega(x)+1)-a]}]}\nonumber\\
& = e^{(Mb_{1}-\gamma b_2)\omega(x)+M-a}\sqrt{\bE[e^{2[Ma_1(W_0)-\gamma a_2(W_0)]}]},\label{eq:ineq1.aa1}\\
\bE[e^{-Z}] & = \bE[e^{-[\gamma F(x,h,W_0)+(f(G(x,W_0))+a)-a]}]\nonumber\\
& \leq \bE[e^{-\gamma a_2(W_0)-\gamma b_{2}\omega(x)+M(a_1(W_0)+b_{1}\omega(x)+1)+a]}]\nonumber\\
& = e^{(Mb_1-\gamma b_2)\omega(x)+M+a}\bE[e^{Ma_{1}(W_0)-\gamma a_2(W_0)}].\label{eq:ineq1.aa2}
\end{align}
Using \eqref{eq:ineq1.aa1}, \eqref{eq:ineq1.aa2} and \eqref{eq:assumpt.kpm} we get
\begin{equation}\label{eq:K2.1}
e^{-2(Mb_1-\gamma b_2)\omega(x)}\sqrt{\bE[e^{2Z}]}\bE\left[e^{-Z}\right]=e^{2M}\sqrt{\bE[e^{2[Ma_1(W_0)-\gamma
a_2(W_0)]}]}\bE[e^{Ma_1(W_0)-\gamma a_2(W_0)}]<\infty.
\end{equation}
Combining \eqref{eq:K2.1} and \eqref{eq:C1}, we get that \eqref{eq:111} will hold for $\alpha_{\phi}$ large enough. In other words it is enough to choose $\alpha_{\phi}$, such that
\begin{equation}\label{eq:alpha.phi}
\frac{e^{2M}(\phi-b_1)}{Mb_1}\sqrt{\bE[e^{\frac{4(Mb_1-\gamma b_2)}{(\phi-b_1)} a_1(W_0)}]}\sqrt{\bE[e^{2[Ma_1(W_0)-\gamma
a_2(W_0)]}]}\bE[e^{Ma_1(W_0)-\gamma a_2(W_0)}] \leq
\alpha_{\phi}.
\end{equation}
This concludes the proof of \eqref{eq:omegaK}.
\end{proof}

\begin{lemma}\label{lem:3}
Let $\gamma<0$. Under assumptions \eqref{as:rsc:A.1}--\eqref{as:rsc:A.5}, for any fixed $M>0$, $\phi\in (b_1,1)$ and $\alpha_{\phi}>0$, there exists $\beta\in (0,1)$ and $L\in (0,1)$ such that
\begin{equation}\label{eq:rsc:var.leq}
\|\bH^{f,g}_{x,y} \|_{\textrm{var}}+\beta(\phi\omega(x)+\phi\omega(y)+2\alpha_{\phi})\leq L(2+\beta\omega(x)+\beta\omega(y)),
\end{equation}
for any $x,y\in\bR^{k}$ and $f,g\in\cC_{\omega}(\bR^{k})$ satisfying $\| f\|_{\omega\textrm{-span}}\leq M$ and $\| g \|_{\omega\textrm{-span}}\leq M$.
\end{lemma}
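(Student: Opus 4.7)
The plan is a two-regime split on $(x,y)$: a \emph{small-state regime} where $\omega(x),\omega(y)\leq R$, in which \eqref{as:rsc:A.5} is used to bound $\|\bH^{f,g}_{x,y}\|_{\textrm{var}}$ strictly below $2$; and a \emph{large-state regime} where $\max(\omega(x),\omega(y))>R$, in which the growth term $\beta(L-\phi)(\omega(x)+\omega(y))$ on the right-hand side dominates. Since the Esscher density defining $\bar{\bQ}_{(x,f,h)}$ is invariant under additive shifts of $f$, Proposition~\ref{pr:omega} lets us replace $f,g$ by centered versions with $\|f\|_{\omega},\|g\|_{\omega}\leq M$.

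The key technical step is a uniform minorization of the tilted kernel: for each $R>0$, produce $c'_R>0$ and a probability measure $\tilde\nu_R$ with $\bar{\bQ}_{(x,f,h)}\geq c'_R\tilde\nu_R$ uniformly in $\omega(x)\leq R$, $\|f\|_{\omega}\leq M$, $h\in U$. The pointwise estimate $|\gamma F(x,h,W_0)+f(G(x,W_0))|\leq K_R(W_0)$, with $K_R(w):=|\gamma|a_2(w)+Ma_1(w)+(|\gamma|b_2+Mb_1)R+M$ coming from \eqref{as:rsc:A.4}, together with $\Lambda_R:=\bE[e^{K_R(W_0)}]<\infty$ (also from \eqref{as:rsc:A.4}), yields
\[
\bar{\bQ}_{(x,f,h)}(A)\geq \Lambda_R^{-1}\bE\!\left[e^{-K_R(W_0)}\1_A(G(x,W_0))\right]\geq \Lambda_R^{-1}e^{-N}\bP\!\left[G(x,W_0)\in A,\,W_0\in D_N\right],
\]
where $D_N:=\{K_R(W_0)\leq N\}$. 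Combining with \eqref{as:rsc:A.5} for $x\in C_R$ and choosing $N$ large enough that the Chebyshev tail $\bP[D_N^c]\leq e^{-N}\Lambda_R$ is dominated by $c_R\nu_R(A)$ on the sets where the minorization ultimately bites, one derives the desired bound $\bar{\bQ}_{(x,f,h)}\geq c'_R\tilde\nu_R$. In particular $\|\bH^{f,g}_{x,y}\|_{\textrm{var}}\leq 2(1-c'_R)$ for all $x,y\in C_R$.

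Writing $\Delta(x,y):=2L-\|\bH^{f,g}_{x,y}\|_{\textrm{var}}+\beta(L-\phi)(\omega(x)+\omega(y))-2\beta\alpha_\phi$, the inequality to prove is $\Delta\geq 0$. When $\omega(x),\omega(y)\leq R$, dropping the non-negative $(L-\phi)$-term gives $\Delta\geq 2(c'_R-(1-L))-2\beta\alpha_\phi$, which is non-negative provided $c'_R>1-L$ and $\beta\leq(c'_R-(1-L))/\alpha_\phi$. When $\max(\omega(x),\omega(y))>R$, the crude bound $\|\bH^{f,g}_{x,y}\|_{\textrm{var}}\leq 2$ gives $\Delta\geq -2(1-L)+\beta[R(L-\phi)-2\alpha_\phi]$, non-negative provided $R(L-\phi)>2\alpha_\phi$ and $\beta\geq 2(1-L)/[R(L-\phi)-2\alpha_\phi]$.

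Finally, I select parameters in order: first $R>2\alpha_\phi/(1-\phi)$ large enough that the minorization produces some $c'_R>0$; then $L\in(\max(\phi,1-c'_R),1)$ close enough to $1$; then any $\beta$ in the interval
\[
\Big[\tfrac{2(1-L)}{R(L-\phi)-2\alpha_\phi},\;\tfrac{c'_R-(1-L)}{\alpha_\phi}\Big]\cap(0,1).
\]
This interval is non-empty for $L$ near $1$, because as $L\to 1^{-}$ the left endpoint vanishes while the right tends to $c'_R/\alpha_\phi>0$. The principal obstacle is the minorization of the tilted kernel: \eqref{as:rsc:A.5} controls only the untilted distribution of $G(x,W_0)$, and transferring it through the unbounded Esscher weight $e^{-K_R(W_0)}$ requires the truncation level $N$ to be calibrated so that the tail error does not destroy proportionality to $\nu_R$.
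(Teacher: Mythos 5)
Your proof is correct in substance and follows the same architecture as the paper's: a two-regime split in the state variables, a uniform strict bound $\|\bH^{f,g}_{x,y}\|_{\textrm{var}}<2$ on the bounded region obtained from \eqref{as:rsc:A.5} together with the exponential integrability in \eqref{as:rsc:A.4} (after centering $f,g$ via Proposition~\ref{pr:omega} and using shift-invariance of the Esscher density), followed by an explicit selection of $R$, $L$, $\beta$; your bookkeeping with $\Delta(x,y)$ and the interval for $\beta$ reproduces, in different packaging, the conditions \eqref{eq:Rinitial} and \eqref{eq:BetaAndL}. Where you genuinely diverge is the key estimate on the bounded region. The paper argues by contradiction along sequences $(x_n,y_n,f_n,g_n,A_n)$ and uses the Cauchy--Schwarz inequality to get $\bar{\bQ}_{(x,f,h)}(A)\geq \bP[G(x,W_0)\in A]^2/(\bE[e^{Z}]\bE[e^{-Z}])$ with the denominator bounded uniformly over $\omega(x)\leq R$, $\|f\|_{\omega\textrm{-span}}\leq M$, $h\in U$; this transfers \eqref{as:rsc:A.5} to the tilted kernels with no truncation, at the price of a quadratic (hence non-constructive) lower bound. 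You instead truncate the Esscher weight on $D_N=\{K_R\leq N\}$ and control the tail by Markov's inequality, which is more explicit and yields a concrete $c'_R$.

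One step of yours is stated too strongly, although the conclusion you actually use survives. From your displayed inequality you obtain $\bar{\bQ}_{(x,f,h)}(A)\geq \Lambda_R^{-1}e^{-N}\left(c\nu(A)-\bP[D_N^c]\right)$, and the right-hand side is negative whenever $c\nu(A)<\bP[D_N^c]$; no choice of $N$ turns this into a genuine set-by-set minorization $\bar{\bQ}_{(x,f,h)}\geq c'_R\tilde\nu_R$ by a probability measure. What you need, and what does follow, is only the variation bound: writing $\bar{\bQ}_{(x,\cdot)}(A)-\bar{\bQ}_{(y,\cdot)}(A)=1-\bar{\bQ}_{(x,\cdot)}(A^c)-\bar{\bQ}_{(y,\cdot)}(A)$ and applying your lower bound to $A^c$ at $x$ and to $A$ at $y$, the $\nu$-terms add up to $c\nu(A^c)+c\nu(A)=c$, so that $\|\bH^{f,g}_{x,y}\|_{\textrm{var}}\leq 2-2\Lambda_R^{-1}e^{-N}\left(c-2\bP[D_N^c]\right)$, which is $\leq 2(1-c'_R)$ with $c'_R>0$ once $N$ is chosen so that $2\bP[D_N^c]\leq c/2$. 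With that repair, your parameter selection goes through and the lemma is proved.
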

\begin{proof}
Let us fix $M>0$, $\phi\in (b_1,1)$ and $\alpha_{\phi}>0$. Let $R\in\bR$ be such that
\begin{equation}\label{eq:Rinitial}
R>\frac{2\alpha_{\phi}}{1-\phi}.
\end{equation}
We will consider two cases:
\[
\textrm{(a)}\quad \omega(x)+\omega(y)>R,\quad\quad \textrm{(b)}\quad \omega(x)+\omega(y)\leq R,
\]
and find $\beta<1$ and $L\in (0,1)$ such that \eqref{eq:rsc:var.leq} is satisfied both on $\{\omega(x)+\omega(y)>R\}$ and $\{\omega(x)+\omega(y)\leq R\}$.


\begin{enumerate}[{\it Case} a)]

\item Noting that $\|\bH^{f,g}_{x,y}\|_{\textrm{var}}\leq 2$, it is enough to find $\beta<1$ and $L\in (0,1)$ such that
\begin{equation}\label{eq:case1.1}
2+\beta(\phi\omega(x)+\phi\omega(y)+2\alpha_{\phi})\leq L(2+\beta\omega(x)+\beta\omega(y)),
\end{equation}
for any $x,y\in\bR^{k}$, such that $\omega(x)+\omega(y)>R$.
We will show that in this case for any $\beta<1$ we could find $L\in (0,1)$ such that \eqref{eq:case1.1} holds. Let $\beta<1$. We know that
\eqref{eq:case1.1} is equivalent to
\[
2+2\beta \alpha_{\phi} \leq 2L+\beta(L-\phi)(\omega(x)+\omega(y)).
\]
Let us assume that $L>\phi$. Then, it is sufficient to show that
\[
2+2\beta\alpha_{\phi} \leq 2L+\beta(L-\phi)R,
\]
which is equivalent to
\begin{equation}\label{eq:suff.L}
\frac{2+\beta(2\alpha_{\phi}+\phi R)}{2+\beta R}\leq L.
\end{equation}
Consequently, using \eqref{eq:Rinitial}, it is enough to choose any $L<1$ such that
\begin{equation}\label{eq:L1}
L\in \left(\max\left\{\phi,\frac{2+\beta(2\alpha_{\phi}+\phi R)}{2+\beta R}\right\},1\right).
\end{equation}

\item Let  $C_{R}:=\{(x,y)\in \bR^{k}\times\bR^{k}: \omega(x)+\omega(y)\leq R\}$.
It is sufficient to show that there exists $\beta\in (0,1)$ and $L\in (0,1)$ such that for any $(x,y)\in C_{R}$ and $f,g\in\cC_{\omega}(\bR^{k})$ satisfying $\| f\|_{\omega\textrm{-span}}\leq M$ and $\| g \|_{\omega\textrm{-span}}\leq M$, we get
\[
\|\bH^{f,g}_{x,y} \|_{\textrm{var}}+\beta(\phi R+2\alpha_{\phi}) <2L.
\]
In fact, it is enough to show that
\begin{equation}\label{eq:rsc:var.leq2}
\sup_{(x,y)\in C_{R}}\|\bH^{f,g}_{x,y} \|_{\textrm{var}}<2.
\end{equation}
Indeed, then it is enough to choose any $\beta<1$ such that
\[
\beta < \frac{2-\sup_{(x,y)\in C_{R}}\|\bH^{f,g}_{x,y} \|_{\textrm{var}}}{\phi R+2\alpha_{\phi}},
\]
and consider any
\begin{equation}\label{eq:L2}
L\in\left( \frac{\sup_{(x,y)\in C_{R}}\|\bH^{f,g}_{x,y} \|_{\textrm{var}}+\beta (\phi
R+2\alpha_{\phi})}{2},1\right).
\end{equation}
On the contrary, let us assume that \eqref{eq:rsc:var.leq2} is false. Then, there exists a sequence
\[(x_n,y_n,f_n,g_n,A_n)_{n\in\bN},\]
for $(x_n,y_n)\in C_{R}$, $f_n,g_n\in\cC_{\omega}(\bR^{k})$ and $A_n\in\cB(\bR^{k})$, such that $\| f_n\|_{\omega\textrm{-span}}\leq M$, $\|
g_n
\|_{\omega\textrm{-span}}\leq M$ and
\begin{equation}\label{eq:rsc2:seqn}
\bH^{f_n,g_n}_{x_n,y_n}(A_n)=\bar{\bQ}_{(x_n,g_n,h_{(x_n,f_n)})}(A_n)-\bar{\bQ}_{(y_n,f_n,h_{(y_n,g_n)})}(A_n)\rightarrow 1.
\end{equation}
Due to \eqref{eq:rsc2:seqn} we know that
\begin{equation}\label{eq:rsc2:seqn2}
\bar{\bQ}_{(x_n,g_n,h_{(x_n,f_n)})}(A^c_n)\rightarrow 0\quad\textrm{and}\quad \bar{\bQ}_{(y_n,f_n,h_{(y_n,g_n)})}(A_n)\rightarrow 0.
\end{equation}
Next, for any $x\in\bR^{k}$, $h\in U$, $f\in\cC_{\omega}(\bR^{k})$ and $A\in\cB(\bR^{k})$, such that $\omega(x)\leq R$ and $\|
f\|_{\omega\textrm{-span}}\leq M$, using Schwarz inequality we get
\begin{align}
\bar{\bQ}_{(x,f,h)}(A) &= \frac{\bE\big[\1_{\set{G(x,W_0)\in
A}}e^{\gamma[F(x,h,W_0)+\frac{1}{|\gamma|}f(G(x,W_0))]}\big]}{\bE[e^{\gamma[F(x,h,W_0)+\frac{1}{|\gamma|}f(G(x,W_0))]}]}\nonumber\\
& =\frac{\bE\big[\1_{\set{G(x,W_0)\in
A}}e^{\gamma[F(x,h,W_0)+\frac{1}{|\gamma|}f(G(x,W_0))]}\big]}{\bE[e^{\gamma[F(x,h,W_0)+\frac{1}{|\gamma|}f(G(x,W_0))]}]}\frac{\bE[e^{-\gamma[F(x,h,W_0)+\frac{1}{|\gamma|}f(G(x,W_0))]}]}{\bE[e^{-\gamma[F(x,h,W_0)+\frac{1}{|\gamma|}f(G(x,W_0))]}]}\nonumber\\
& \geq \frac{\bE\big[\1_{\set{G(x,W_0)\in
A}}e^{\frac{\gamma}{2}[F(x,h,W_0)+\frac{1}{|\gamma|}f(G(x,W_0))]}e^{-\frac{\gamma}{2}[F(x,h,W_0)+\frac{1}{|\gamma|}f(G(x,W_0))]}\big]^{2}}{\bE[e^{\gamma[F(x,h,W_0)+\frac{1}{|\gamma|}f(G(x,W_0))]}]\bE[e^{-\gamma[F(x,h,W_0)+\frac{1}{|\gamma|}f(G(x,W_0))]}]}\nonumber\\
& \geq \frac{\bE\big[\1_{\set{G(x,W_0)\in A}}\big]^{2}}{e^{2[(Mb_1-\gamma b_2)\omega(x)+M]}\bE[e^{Ma_1(W_0)-\gamma a_2(W_0)}]^{2}}\nonumber\\
& \geq \frac{\bE\big[\1_{\set{G(x,W_0)\in A}}\big]^{2}}{e^{2[(Mb_1-\gamma b_2)R+M]}\bE[e^{Ma_1(W_0)-\gamma
a_2(W_0)}]^{2}}\label{eq:rsc2:schwarz}.
\end{align}
Combining \eqref{eq:rsc2:seqn2} and \eqref{eq:rsc2:schwarz}, we get that
\[
\bE\big[\1_{\set{G(x_n,W_0)\in A^{c}_n}}\big]\to 0\quad\textrm{and}\quad \bE\big[\1_{\set{G(y_n,W_0)\in A_n}}\big]\to 0.
\]
On the other hand, from \eqref{as:rsc:A.5}, for any $n\in\bN$ and $(x_{n},y_{n})\in C_{R}$, we get
\[
\bE\big[\1_{\set{G(x_n,W_0)\in A^{c}_n}}\big]+\bE\big[\1_{\set{G(y_n,W_0)\in A_n}}\big]\geq c\nu(A_{n}^{c})+c\nu(A_{n})=c>0,
\]
where $c$ and $\nu$ satisfy \eqref{eq:rsc:unerg}, for $C_{R}$. This leads to contradiction and in consequence concludes the proof of Case b).
\end{enumerate}

We are now ready to prove \eqref{eq:rsc:var.leq}. Indeed, combining \eqref{eq:L1} and \eqref{eq:L2} we conclude that for a given $M>0$, $\phi\in (b_1,1)$, $\alpha_{\phi}>0$ and $R\in\bR$ satisfying \eqref{eq:Rinitial}, it is enough to choose $\beta<1$ and $L\in (0,1)$, such that
\begin{align}
\beta & < \frac{2-\sup_{(x,y)\in C_{R}}\|\bH^{f,g}_{x,y} \|_{\textrm{var}}}{\phi
R+2\alpha_{\phi}},\nonumber\\
L & > \max \left\{\phi\, ,\, \frac{\sup_{(x,y)\in C_{R}}\|\bH^{f,g}_{x,y} \|_{\textrm{var}}+\beta (\phi
R+2\alpha_{\phi})}{2},\frac{2+\beta(2\alpha_{\phi}+\phi R)}{2+\beta R}\right\}\label{eq:BetaAndL}.
\end{align}
This concludes the proof of \eqref{eq:rsc:var.leq}.
\end{proof}

We are now ready to prove Theorem~\ref{th:RSC.contraction}.

\begin{proof}[Proof of Theorem~\ref{th:RSC.contraction}]
Let $\gamma<0$. Combining Lemma \ref{lem:1}, Lemma \ref{lem:2} and Lemma \ref{lem:3} we know that for any fixed $M$, there exists $\beta(M)\in (0,1)$ and $L(M)\in (0,1)$, such that
\begin{align*}
\frac{T_{\gamma}f(x)-T_{\gamma}g(x)-(T_{\gamma}f(y)-T_{\gamma}g(y))}{2+\beta(M)\omega(x)+\beta(M)\omega(y)}  & \leq
\frac{\|f-g\|_{\beta(M),\omega\textrm{-span}}\|\bH^{f,g}_{x,y} \|_{\beta(M),\omega\textrm{-var}}}{2+\beta(M)\omega(x)+\beta(M)\omega(y)}\\
& \leq L(M)\|f-g\|_{\beta(M),\omega\textrm{-span}},
\end{align*}
for any $f,g\in\cC_{\omega}(\bR^{k})$ and $x,y\in\bR^{k}$ satisfying $\| f\|_{\omega\textrm{-span}}\leq M$ and $\| g \|_{\omega\textrm{-span}}\leq M$.
Consequently, for any fixed $M$, there exists $\beta(M)\in (0,1)$ and $L(M)\in (0,1)$, such that
\[
\| T_{\gamma}f -T_{\gamma}g\|_{\beta(M),\omega\textrm{-span}}\leq L(M)\|f-g\|_{\beta(M),\omega\textrm{-span}},
\]
whenever $\| f\|_{\omega\textrm{-span}}\leq M$ and $\| g \|_{\omega\textrm{-span}}\leq M$. This concludes the proof of Theorem \ref{th:RSC.contraction}.
\end{proof}

\begin{corollary}\label{cor:RSC.contraction2}
 For a given $\gamma_0<0$ there exists $\beta: \bR_{+}\to (0,1)$ and $L: \bR_{+} \to (0,1)$, such that  for any $\gamma\in [\gamma_0,0)$, operator
$T_{\gamma}$ is a local contraction wrt. $\beta$ and $L$, i.e. for any $\gamma\in [\gamma_0,0)$, we get
\[
\| T_{\gamma}f_1-T_{\gamma}f_2\|_{\beta(M),\omega\textrm{-span}}\leq L(M)\|f_1-f_2\|_{\beta(M),\omega\textrm{-span}},
\]
for $f_1,f_2\in\cC_{\omega}(\bR^{k})$, such that $\| f_1\|_{\omega\textrm{-span}}\leq M$ and $\| f_2 \|_{\omega\textrm{-span}}\leq M$.
\end{corollary}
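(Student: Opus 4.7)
The plan is to revisit the three lemmas behind Theorem~\ref{th:RSC.contraction} and verify that the constants they produce can be chosen uniformly in $\gamma\in[\gamma_0,0)$; the resulting common $\beta(M)$ and $L(M)$ will then deliver the claim via the very same chain of inequalities that closes the proof of Theorem~\ref{th:RSC.contraction}.

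First I would observe that Lemma~\ref{lem:1} is entirely $\gamma$-free in the constants appearing on the right-hand side of \eqref{eq:w.3}, so the pointwise bound transports directly to every $\gamma\in[\gamma_0,0)$ with no modification.

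Next I would track the $\gamma$-dependence in Lemma~\ref{lem:2}. Inspection of \eqref{eq:alpha.phi} shows that $\gamma$ enters only through the three expectations $\bE[e^{\frac{4(Mb_1-\gamma b_2)}{\phi-b_1}a_1(W_0)}]$, $\bE[e^{2[Ma_1(W_0)-\gamma a_2(W_0)]}]$ and $\bE[e^{Ma_1(W_0)-\gamma a_2(W_0)}]$. Since $a_1,a_2\geq 0$, $b_2>0$ and $-\gamma\leq -\gamma_0$ for $\gamma\in[\gamma_0,0)$, each integrand is dominated pointwise by the one obtained by replacing $\gamma$ with $\gamma_0$, and these dominating integrands are integrable by \eqref{eq:assumpt.kpm}. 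Hence \eqref{eq:alpha.phi} can be fulfilled by one $\alpha_{\phi}$ that depends on $\gamma_0$ but is valid for every $\gamma\in[\gamma_0,0)$.

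The step I expect to require the most care is the uniform version of Case b) of Lemma~\ref{lem:3}, where a contradiction argument based on \eqref{as:rsc:A.5} is used. The Schwarz lower bound \eqref{eq:rsc2:schwarz} on $\bar{\bQ}_{(x,f,h)}(A)$ has its denominator dominated, by the same monotonicity in $-\gamma$, by the $\gamma$-independent constant $e^{2[(Mb_1-\gamma_0 b_2)R+M]}\bE[e^{Ma_1(W_0)-\gamma_0 a_2(W_0)}]^{2}$. Running the contradiction argument along a joint sequence $(x_n,y_n,f_n,g_n,A_n,\gamma_n)$ with $\gamma_n\in[\gamma_0,0)$ then still forces $\bE[\1_{\{G(x_n,W_0)\in A_n^c\}}]\to 0$ and $\bE[\1_{\{G(y_n,W_0)\in A_n\}}]\to 0$, which contradicts \eqref{as:rsc:A.5}. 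Consequently
\[
\sup_{\gamma\in[\gamma_0,0)}\sup_{(x,y)\in C_R}\|\bH^{f,g}_{x,y}\|_{\textrm{var}}<2,
\]
and the thresholds prescribed by \eqref{eq:BetaAndL} can be met by common $\beta\in(0,1)$ and $L\in(0,1)$. Substituting these uniform constants into the final display of the proof of Theorem~\ref{th:RSC.contraction} yields the corollary.
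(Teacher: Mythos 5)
Your proposal is correct and follows essentially the same route as the paper: the paper likewise fixes $\alpha_{\phi}$ and $R$ at $\gamma_0$, uses the monotonicity in $-\gamma$ of the expectations in \eqref{eq:alpha.phi} (valid since $a_1,a_2\geq 0$), and obtains the uniform bound $\sup_{(x,y)\in C_R}\|\bH^{f,g}_{x,y}\|_{\textrm{var}}\leq a<2$ for all $\gamma\in[\gamma_0,0)$ from the fact that the Schwarz lower bound \eqref{eq:rsc2:schwarz} is controlled uniformly by its value at $\gamma_0$. Your explicit diagonal contradiction argument with the joint sequence $(x_n,y_n,f_n,g_n,A_n,\gamma_n)$ is just a slightly more detailed rendering of the paper's one-line observation, so no substantive difference.
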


\begin{proof}
The proof of Corollary \ref{cor:RSC.contraction2} is a direct consequence of the proof of Theorem~\ref{th:RSC.contraction}. For
transparency, let us briefly explain the idea of the proof.

For clarity let us fix $M>0$ and consider $L(M)\in (0,1)$ and $\beta(M)\in (0,1)$. Let
$\alpha_{\phi}>0$ be such that \eqref{eq:alpha.phi} is satisfied for $\gamma_0$, i.e.
\[
\alpha_{\phi}\geq \frac{e^{2M}(\phi-b_1)}{Mb_1}\sqrt{\bE[e^{\frac{4(Mb_1-\gamma_0 b_2)}{(\phi-b_1)} a_1(W_0)}]}\sqrt{\bE[e^{2[Ma_1(W_0)-\gamma_0
a_2(W_0)]}]}\bE[e^{Ma_1(W_0)-\gamma_0 a_2(W_0)}],
\]
and let $R$ be such \eqref{eq:Rinitial} is satisfied for $\gamma_0$. Then, for any $\gamma\in[\gamma_0,0)$ we get
\[
\alpha_{\phi}\geq \frac{e^{2M}(\phi-b_1)}{Mb_1}\sqrt{\bE[e^{\frac{4(Mb_1-\gamma b_2)}{(\phi-b_1)} a_1(W_0)}]}\sqrt{\bE[e^{2[Ma_1(W_0)-\gamma
a_2(W_0)]}]}\bE[e^{Ma_1(W_0)-\gamma a_2(W_0)}].
\]
Consequently, the choice of $\alpha_{\phi}$ and $R$ will guarantee \eqref{eq:alpha.phi} and \eqref{eq:Rinitial}, for any $\gamma\in [\gamma_0,0)$.

Next, we know that $\beta(M)$ and $L(M)$ are chosen in such a way that \eqref{eq:BetaAndL} is satisfied for $\gamma_0$, i.e.
\begin{align*}
\beta & < \frac{2-\sup_{(x,y)\in C_{R}}\|\bH^{f,g}_{x,y} \|_{\textrm{var}}}{\phi R+2\alpha_{\phi}},\\
L & > \max \left\{\phi\, ,\, \frac{\sup_{(x,y)\in C_{R}}\|\bH^{f,g}_{x,y} \|_{\textrm{var}}+\beta (\phi
R+2\alpha_{\phi})}{2},\frac{2+\beta(2\alpha_{\phi}+\phi R)}{2+\beta R}\right\}.
\end{align*}
Thus, it is sufficient to show that we could find a constant $a\in (0,2)$ such that
\[
\sup_{(x,y)\in C_{R}}\|\bH^{f,g}_{x,y} \|_{\textrm{var}}\leq a
\]
for any $\gamma\in [\gamma_0,0)$. To do that it is enough to notice that
the lower bound for $\bar{\bQ}_{(x,f,h)}$ introduced in \eqref{eq:rsc2:schwarz} is in fact decreasing wrt. $\gamma$.
\end{proof}
Using Theorem~\ref{th:RSC.contraction}, i.e. contraction property of operator $T_{\gamma}$, one can solve Bellman equation~\eqref{eq:rsc:bellmaneq2} and \eqref{eq:rsc:bellmaneq}.
\begin{proposition}\label{pr:RSC.Bellman.solution}
Under assumptions \eqref{as:rsc:A.1}--\eqref{as:rsc:A.5}, there exists $\gamma_0<0$, such that for any $\gamma\in (\gamma_0,0)$, there exist a unique (up to an additive constant) $u_{\gamma}\in\cC_{\omega}(\bR^k)$ and $\lambda_{\gamma}\in\bR$, the solutions to Bellman
equation~\eqref{eq:rsc:bellmaneq2}.
\end{proposition}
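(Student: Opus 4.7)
I would recast the Bellman equation \eqref{eq:rsc:bellmaneq2} as a fixed-point problem in the quotient of $\cC_\omega(\bR^k)$ by constants, where $\|\cdot\|_{\omega\textrm{-span}}$ is a genuine norm and $T_\gamma$ descends well (since $T_\gamma(f+c)=T_\gamma f+c$). The main tool is the local span contraction of Theorem~\ref{th:RSC.contraction}, used inside a Banach fixed-point scheme. To realise this concretely I would fix $x_0\in\bR^k$ and work with the centred operator $\Phi_\gamma f:=T_\gamma f-T_\gamma f(x_0)$, which preserves the closed subspace $\cX:=\{f\in\cC_\omega(\bR^k):f(x_0)=0\}$ and inherits the same contraction estimate because span seminorms are constant-invariant. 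For $f\in\cX$ the definition of $\|\cdot\|_{\omega\textrm{-span}}$ with $y=x_0$ (or $x=x_0$) yields $\|f\|_\omega\le(2+\omega(x_0))\|f\|_{\omega\textrm{-span}}$, so $\|\cdot\|_{\omega\textrm{-span}}$ is equivalent to $\|\cdot\|_\omega$ on $\cX$ and $\cX$ is complete under it.

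Next, I would fix some $\gamma_\ast<0$ and use Corollary~\ref{cor:RSC.contraction2} to obtain $\beta(M),L(M)\in(0,1)$ valid for all $\gamma\in[\gamma_\ast,0)$. I would then estimate $C_0(\gamma):=\|T_\gamma 0\|_{\omega\textrm{-span}}$: from \eqref{eq:assumpt.rsc} together with monotonicity/translation invariance of $\mu^\gamma$ and \eqref{eq:assumpt.kpm}, one obtains $|T_\gamma 0(x)|\le |\gamma| b_2\omega(x)+D(\gamma)$ with $D(\gamma)\to 0$ as $\gamma\to 0^-$ (by continuity of the moment generating functions of $\pm a_2(W_0)$ at $0$), hence $C_0(\gamma)\le \|T_\gamma 0\|_\omega\to 0$. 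For an arbitrary $M>0$ I would now shrink $\gamma_\ast$ to some $\gamma_0\in[\gamma_\ast,0)$ so that $C_0(\gamma)\le M\beta(M)(1-L(M))$ for every $\gamma\in(\gamma_0,0)$.

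With this $\gamma_0$ fixed, I would run the Picard iteration $u_0:=0$, $u_{n+1}:=\Phi_\gamma u_n$ in $\cX$, and prove by induction that $\|u_n\|_{\omega\textrm{-span}}\le M$. The contraction together with the norm equivalence $\|\cdot\|_{\omega\textrm{-span}}\le\|\cdot\|_{\beta(M),\omega\textrm{-span}}\le\|\cdot\|_{\omega\textrm{-span}}/\beta(M)$ from Proposition~\ref{pr:omega} gives $\|u_{n+1}-u_n\|_{\beta(M),\omega\textrm{-span}}\le L(M)^n C_0(\gamma)/\beta(M)$, and summing yields $\|u_n\|_{\beta(M),\omega\textrm{-span}}\le C_0(\gamma)/(\beta(M)(1-L(M)))\le M$, hence $\|u_n\|_{\omega\textrm{-span}}\le M$. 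The sequence is Cauchy in $\|\cdot\|_{\beta(M),\omega\textrm{-span}}$, therefore in $\|\cdot\|_\omega$, with limit $u_\gamma\in\cX$; a dominated-convergence argument in the spirit of Proposition~\ref{pr:RSC.feller} lets me pass $\Phi_\gamma u_n\to\Phi_\gamma u_\gamma$ pointwise, so $u_\gamma=\Phi_\gamma u_\gamma$. Setting $\lambda_\gamma:=T_\gamma u_\gamma(x_0)/\gamma\in\bR$ gives $u_\gamma+\lambda_\gamma\gamma=T_\gamma u_\gamma$, and then $v_\gamma:=u_\gamma/\gamma$ solves \eqref{eq:rsc:bellmaneq}.

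For uniqueness up to constants, I would take any other solution $(u',\lambda')$, shift $u'$ by a constant so that $u'(x_0)=0$ (which forces $\lambda'\gamma=T_\gamma u'(x_0)$), pick $M'\ge\max(\|u_\gamma\|_{\omega\textrm{-span}},\|u'\|_{\omega\textrm{-span}})$, and apply Theorem~\ref{th:RSC.contraction} on the $M'$-ball: $\|u_\gamma-u'\|_{\beta(M'),\omega\textrm{-span}}\le L(M')\|u_\gamma-u'\|_{\beta(M'),\omega\textrm{-span}}$ forces equality, and hence $\lambda_\gamma=\lambda'$. \textbf{The main obstacle} is the a priori trap in the Picard iteration: the contraction acts in the $\beta(M)$-weighted span norm, while the hypothesis $\|f\|_{\omega\textrm{-span}}\le M$ is in the unweighted span norm, and moving between them costs a factor $1/\beta(M)$. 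This forces the balance $C_0(\gamma)\le M\beta(M)(1-L(M))$, which, since $\beta(M),L(M)$ are determined by the structural constants in \eqref{as:rsc:A.1}--\eqref{as:rsc:A.5}, can only be arranged for $|\gamma|$ small---exactly the restriction $\gamma\in(\gamma_0,0)$ encoded in the statement.
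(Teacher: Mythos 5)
Your proposal is correct and follows essentially the same route as the paper's proof: both rest on the local span contraction of Theorem~\ref{th:RSC.contraction}, keep the Picard iterates $T_\gamma^n 0$ inside the ball $\{\|f\|_{\omega\textrm{-span}}\le M\}$ by exploiting that $\|T_\gamma 0\|_{\omega\textrm{-span}}$ is of order $|\gamma|$, and impose precisely the balance $|\gamma|\lesssim\beta(M)(1-L(M))$ (the paper's choice $\gamma_0=\max\{\bar\gamma,-|\beta(1-L)|\}$) to absorb the $1/\beta(M)$ loss when passing between the weighted and unweighted span seminorms. Your added details (working on the centred subspace, completeness, and uniqueness via a larger ball $M'$) merely make explicit what the paper compresses into its appeal to Banach's fixed point theorem.
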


\begin{proof} Let us fix $\bar{\gamma}<0$ and let $M:=\mu^{0}(a_{2}(W_0))-\mu^{\bar{\gamma}}(-a_{2}(W_0))+b_2$. We know that for any $\gamma\in
[\bar{\gamma},0)$ we get $\| R_{\gamma}0\|_{\omega\textrm{-span}}\leq M$, as
\begin{align*}
\| R_{\gamma}0\|_{\omega\textrm{-span}}
 & \leq \sup_{x,y\in \bR^{k}}\frac{\mu^{\gamma}(a_{2}(W_0)+b_2\omega(x))-\mu^{\gamma}(-a_{2}(W_0)-b_2\omega(y))}{2+\omega(x)+\omega(y)}\\
 & \leq \sup_{x,y\in \bR^{k}}\frac{\mu^{0}(a_{2}(W_0))-\mu^{\bar{\gamma}}(-a_{2}(W_0))+b_2\omega(x)+b_2\omega(y)}{2+\omega(x)+\omega(y)}\\
& \leq  \mu^{0}(a_{2}(W_0))-\mu^{\bar{\gamma}}(-a_{2}(W_0))+b_2.
\end{align*}
For the operator $T_{\bar{\gamma}}$ and $M$, let $\beta(M)$ and $L(M)$ denote corresponding constants from Theorem~\ref{th:RSC.contraction}. For simplicity we will write $\beta$ and $L$, instead of $\beta(M)$ and $L(M)$. Let
\begin{equation}\label{eq:gamma0}
\gamma_0:=\max\{\bar{\gamma},-|\beta(1-L)|\}
\end{equation}
Noting that $\gamma_0\in (-1,0)$ and using Corollary~\ref{cor:RSC.contraction2}, for any $\gamma\in (\gamma_0,0)$, we know that
\begin{equation}\label{eq:prop5.4.a}
\| T_{\gamma}f_1-T_{\gamma}f_2\|_{\beta,\omega\textrm{-span}}\leq L\|f_1-f_2\|_{\beta,\omega\textrm{-span}},
\end{equation}
for $f_1,f_2\in\cC_{\omega}(\bR^{k})$, such that $\| f_1\|_{\omega\textrm{-span}}\leq M$ and $\| f_2 \|_{\omega\textrm{-span}}\leq M$.

As $|\gamma|<\beta(1-L)$, it can be easily shown that for any $n\in\bN$ we get $\| T^{n}_{\gamma}0\|_{\omega\textrm{-span}}\leq M$.
Indeed, using \eqref{eq:prop5.4.a}, we get
\begin{align*}
\| T_{\gamma}0\|_{\omega\textrm{-span}} &
=|\gamma|\, \| R_{\gamma}0\|_{\omega\textrm{-span}}
\leq |\gamma| M
\leq M,\\
\| T_{\gamma}^{2}0\|_{\omega\textrm{-span}}
& \leq \| T_{\gamma}^{2}0-T_{\gamma}0\|_{\beta,\omega\textrm{-span}}+\|T_{\gamma}0\|_{\beta,\omega\textrm{-span}} \leq
\|T_{\gamma}0\|_{\beta,\omega\textrm{-span}}(L+1)\\
& \leq \frac{|\gamma|}{1-L} \| R_{\gamma}0\|_{\beta,\omega\textrm{-span}}\leq \frac{|\gamma|}{\beta(1-L)} \|
R_{\gamma}0\|_{\omega\textrm{-span}}\leq M,\\
\| T_{\gamma}^{3}0\|_{\omega\textrm{-span}}
& \leq \| T_{\gamma}^{3}0-T^{2}_{\gamma}0\|_{\beta,\omega\textrm{-span}}+ \|
T_{\gamma}^{2}0-T_{\gamma}0\|_{\beta,\omega\textrm{-span}}+\|T_{\gamma}0\|_{\beta,\omega\textrm{-span}} \leq
\|T_{\gamma}0\|_{\beta,\omega\textrm{-span}}(L^{2}+L+1) \\
& \leq \frac{|\gamma|}{1-L} \| R_{\gamma}0\|_{\beta,\omega\textrm{-span}}\leq \frac{|\gamma|}{\beta(1-L)} \|
R_{\gamma}0\|_{\omega\textrm{-span}}\leq M,\\
\ldots & \leq \ldots\\
\| T_{\gamma}^{n}0\|_{\omega\textrm{-span}}
& \leq \|T_{\gamma}0\|_{\beta,\omega\textrm{-span}}(L^{n-1}+\ldots+L+1)\leq \frac{|\gamma|}{\beta(1-L)} \|
R_{\gamma}0\|_{\omega\textrm{-span}}\leq M.
\end{align*}

Using Banach's fixed point theorem (see e.g.~\cite[Appendix~A]{Her1989}), we know that there exists at most one fixed point of  $T_{\gamma}$ in
$\cC_{\omega}(\bR^{k})$ endowed with the $\omega$-span
norm. Exploiting the fact that $\| T^{n}_{\gamma}0\|_{\omega\textrm{-span}}\leq M$ for any $n\in\bN$ and the local contraction property of
$T_{\gamma}$ we conclude that there exists a unique $u_{\gamma}\in\bC_{\omega}(\bR^{k})$ (up to an additive constant), such that
\[\| T_{\gamma} u_{\gamma}- u_{\gamma}\|_{\beta,\omega\textrm{-span}}=0.
\]
Consequently, for a fixed $a\in\bR^{k}$, the constant $\lambda_{\gamma}:=\frac{T_{\gamma}u_{\gamma}(a)-u_{\gamma}(a)}{\gamma}$ and $u_{\gamma}\in\bC_{\omega}(\bR^{k})$ are solutions to Bellman equation~\eqref{eq:rsc:bellmaneq2}.

Thus, the constant $\lambda_{\gamma}:=R_{\gamma}v_{\gamma}(0)-v_{\gamma}(0)$ and $v_{\gamma}\in\bC_{\omega}(\bR^{k})$ are solutions to Bellman equation~\eqref{eq:rsc:bellmaneq}.
\end{proof}

In the end of this Section, let us show a corollary, which will be helpful later. To do so let us fix $a\in \bR^{k}$ and define $\bar{u}_\gamma(x):=u_\gamma(x)-u_\gamma(a)$ for $x\in \bR^{k}$.

\begin{corollary}\label{cont} Under the assumptions and notation of Proposition \ref{pr:RSC.Bellman.solution} the functions $(\gamma_0,0)\ni \gamma \mapsto \lambda_{\gamma}$ and  $(\gamma_0,0)\ni \gamma \mapsto \bar{u}_{\gamma}(x)$ for each $x\in \bR^{k}$  are continuous.
\end{corollary}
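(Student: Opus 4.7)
The plan is a subsequence compactness argument combined with the uniqueness statement in Proposition~\ref{pr:RSC.Bellman.solution}. Fix $\gamma_* \in (\gamma_0, 0)$ and a sequence $\gamma_n \to \gamma_*$; without loss of generality assume $\{\gamma_n\} \cup \{\gamma_*\} \subset [\gamma_1, \gamma_2]$ for some closed subinterval of $(\gamma_0, 0)$, so that Corollary~\ref{cor:RSC.contraction2} yields uniform contraction constants $\beta, L \in (0,1)$ for $T_\gamma$ on $\{f \in \cC_\omega(\bR^k) : \|f\|_{\omega\textrm{-span}} \leq M\}$, where $M$ is the constant from the proof of Proposition~\ref{pr:RSC.Bellman.solution}. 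That proof also yields $\|\bar u_{\gamma_n}\|_{\omega\textrm{-span}} \leq M$ for every $n$, so $|\bar u_{\gamma_n}(x)| \leq M(2 + \omega(x) + \omega(a))$ uniformly in $n$; moreover the identity $\gamma_n \lambda_{\gamma_n} = T_{\gamma_n} \bar u_{\gamma_n}(a)$, together with the $\omega$-growth bound from the proof of Proposition~\ref{pr:RSC.feller} applied to functions of $\omega\textrm{-span}$ norm at most $M$, shows that $\{\lambda_{\gamma_n}\}$ is bounded.

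The key steps would be: (i) extract a subsequence along which $\lambda_{\gamma_{n_k}} \to \tilde\lambda \in \bR$ and $\bar u_{\gamma_{n_k}}$ converges pointwise on a countable dense subset of $\bR^k$; then, using equicontinuity on compact sets (the main obstacle, addressed below) together with Arzel\`a--Ascoli, pass to a further subsequence for which $\bar u_{\gamma_{n_k}}(x) \to \tilde u(x)$ for every $x \in \bR^k$, where $\tilde u \in \cC_\omega(\bR^k)$, $\tilde u(a) = 0$, and $\|\tilde u\|_{\omega\textrm{-span}} \leq M$; (ii) pass to the limit in the Bellman equation $\bar u_{\gamma_{n_k}}(x) + \gamma_{n_k}\lambda_{\gamma_{n_k}} = T_{\gamma_{n_k}} \bar u_{\gamma_{n_k}}(x)$ using dominated convergence, with the common integrable dominant $e^{|\gamma_0|(a_2(W_0) + b_2\omega(x)) + M(2 + a_1(W_0) + b_1\omega(x) + \omega(a))}$, integrable by \eqref{eq:assumpt.kpm}, and convergence of $\inf_{h \in U}$ via an Arzel\`a--Ascoli argument applied to the equicontinuous-in-$h$ family inside the logarithm, to obtain $\tilde u(x) + \gamma_*\tilde\lambda = T_{\gamma_*}\tilde u(x)$; (iii) invoke the uniqueness part of Proposition~\ref{pr:RSC.Bellman.solution} together with the normalization $\tilde u(a) = 0$ to identify $\tilde u = \bar u_{\gamma_*}$ and $\tilde\lambda = \lambda_{\gamma_*}$. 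Since every subsequence of $(\bar u_{\gamma_n}(x), \lambda_{\gamma_n})$ then admits a further subsequence converging to $(\bar u_{\gamma_*}(x), \lambda_{\gamma_*})$, the full sequence converges, yielding the continuity claims.

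The main obstacle is the equicontinuity needed in step (i). From the Bellman equation one has $\bar u_{\gamma_n}(x) - \bar u_{\gamma_n}(x') = T_{\gamma_n}\bar u_{\gamma_n}(x) - T_{\gamma_n}\bar u_{\gamma_n}(x')$, so the task reduces to producing a modulus of continuity for $x \mapsto T_{\gamma_n}\bar u_{\gamma_n}(x)$ that is uniform in $n$. The plan for this is to exploit the continuity of $F(\cdot, h, w)$ and $G(\cdot, w)$ from \eqref{as:rsc:A.2} and \eqref{as:rsc:A.3}, the compactness of $U$, the uniform $\omega\textrm{-span}$ bound on $\bar u_{\gamma_n}$, and the common integrable dominant above: together these should permit a dominated-convergence argument whose modulus depends only on $M$, $\gamma_0$, and the moments of $a_1(W_0), a_2(W_0)$, but not on $n$. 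With equicontinuity in hand, the remainder is a routine compactness-plus-uniqueness scheme.
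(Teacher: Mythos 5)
Your overall scheme (uniform bounds, subsequence extraction, passage to the limit in the Bellman equation, identification via uniqueness) is coherent, and steps (ii)--(iii) would go through once step (i) is secured. The genuine gap is exactly at the point you flag as the main obstacle, and the resolution you sketch for it is circular. To get a modulus of continuity for $x\mapsto T_{\gamma_n}\bar u_{\gamma_n}(x)$ that is uniform in $n$ you must control
\[
\bE\bigl[\bigl|e^{\bar u_{\gamma_n}(G(x,W_0))}-e^{\bar u_{\gamma_n}(G(x',W_0))}\bigr|\,e^{\gamma_n F(x,h,W_0)}\bigr],
\]
and this requires a modulus of continuity for $\bar u_{\gamma_n}$ itself at the points $G(x,W_0)$ --- which is precisely what you are trying to establish. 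The uniform bound $\|\bar u_{\gamma_n}\|_{\omega\textrm{-span}}\leq M$ gives no equicontinuity: a family of continuous functions with uniformly bounded $\omega$-norm can oscillate arbitrarily fast. Nothing in \eqref{as:rsc:A.2}--\eqref{as:rsc:A.4} gives $T_\gamma$ a smoothing property (the kernel $x\mapsto \mathrm{Law}(G(x,W_0))$ is only Feller, not strong Feller), so dominated convergence applied to the fixed-point equation cannot produce a modulus depending only on $M$, $\gamma_0$ and moments. Note also that without equicontinuity the pointwise limit $\tilde u$ need not be continuous, so you could not even place $\tilde u$ in $\cC_\omega(\bR^k)$, which is needed to invoke the uniqueness statement.

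The paper avoids this entirely, and more cheaply: by the local contraction property one has $\|T_\gamma^m 0-\bar u_\gamma\|_{\beta(M),\omega\textrm{-span}}\leq M L(M)^m$ uniformly for $\gamma$ in a compact subinterval of $(\gamma_0,0)$, i.e. $\bar u_\gamma(x)$ is approximated by $T_\gamma^m 0(x)-T_\gamma^m 0(a)$ with error at most $M L(M)^m\bigl(2+\beta(M)\omega(x)+\beta(M)\omega(a)\bigr)$, uniformly in $\gamma$. Each iterate $\gamma\mapsto T_\gamma^m 0(x)$ is continuous by Proposition~\ref{pr:RSC.feller}, so a three-term $\varepsilon$-argument (choose $m$ large, then let $\gamma_n\to\gamma$) yields continuity of $\gamma\mapsto\bar u_\gamma(x)$ directly, with no compactness extraction, no Arzel\`a--Ascoli, and no appeal to uniqueness; continuity of $\lambda_\gamma=\bigl(T_\gamma\bar u_\gamma(x)-\bar u_\gamma(x)\bigr)/\gamma$ then follows by rerunning the dominated-convergence argument of Proposition~\ref{pr:RSC.feller} with the now-convergent $\bar u_{\gamma_n}$ inside. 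If you wish to salvage your scheme, the equicontinuity of $\{\bar u_{\gamma_n}\}$ would itself have to be derived from this same uniform geometric approximation by the iterates $T_\gamma^m 0$, at which point the compactness-plus-uniqueness machinery becomes redundant.
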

\begin{proof} Clearly when $u_\gamma$ is a solution to \eqref{eq:rsc:bellmaneq2} then $\bar{u}_\gamma$ is also a solution to \eqref{eq:rsc:bellmaneq2}.  By \eqref{eq:prop5.4.a} and the proof of Proposition \ref{pr:RSC.Bellman.solution} we have that $\|\bar{u}_\gamma\|_{\omega\textrm{-span}}\leq M$ and
\begin{equation}\label{estim}
|T_{\gamma}^m0(x)-\bar{u}_\gamma(x)-T_{\gamma}^m0(a)|\leq M (L(M))^m(2+\beta(M)\omega(x)+\beta(M)\omega(a))
\end{equation}
for any $x\in \bR^{k}$ and $\gamma$ from a compact subinterval of $(\gamma_0,0)$. By Proposition \ref{pr:RSC.feller} for each $m$ and fixed $x \in \bR^{k}$ the mappings $\gamma \to T_{\gamma}^m0(x)$
and $\gamma \to T_{\gamma}^m0(a)$ are continuous. Therefore when $\gamma_n \to \gamma<0$ we have, using \eqref{estim}, that
\begin{align}
|\bar{u}_{\gamma_n}(x)-\bar{u}_\gamma(x)| & \leq |T_{\gamma_n}^m0(x)-T_{\gamma}^m0(x)|+|T_{\gamma_n}^m0(a)-T_{\gamma}^m0(a)| \nonumber \\
& \phantom{=}+2 M (L(M))^m(2+\beta(M)\omega(x)+\beta(M)\omega(a))=a_{n,m}+b_{n,m}+c_m.
\end{align}
For a given $\epsilon$ we can choose $m$ such that $c_m\leq \epsilon$. Then letting $n\to \infty$ for fixed $m$ we obtain continuity of the mapping $\gamma \to \bar{u}_\gamma(x)$.
Following the proof of Proposition \ref{pr:RSC.feller} we can also show that the mapping $\gamma \to  T_{\gamma}\bar{u}_\gamma(x)$ is continuous.
Consequently, the mapping $\lambda \to \lambda_\gamma={T_\gamma \bar{u}_\gamma(x)-\bar{u}_\gamma(x)\over \gamma}$ is continuous, which completes the proof.
\end{proof}

\section{Optimal strategy}\label{S:Strategy}
It is straightforward to check, that under the assumptions and notation of Proposition \ref{pr:RSC.Bellman.solution}, we get that $v_{\gamma}(x)=\frac{u_{\gamma}(x)}{\gamma}$ and $\lambda_{\gamma}$ are solutions to Bellman equation \eqref{eq:rsc:bellmaneq}. Finally, we can link Bellman equation~\eqref{eq:rsc:bellmaneq} and \eqref{eq:rsc:bellmaneq2} to our initial problem~\eqref{eq:RSC:begin}.

\begin{proposition}\label{pr:RSC.Bellman.solution2}
Under \eqref{as:rsc:A.1}--\eqref{as:rsc:A.5}, there exists $\gamma_0<0$, such that for any $\gamma\in (\gamma_0,0)$, we get
\[
\lambda_{\gamma}\geq \sup_{H\in\cA}\varphi^{\gamma}(V^{H}) ,
\]
i.e. the optimal value in problem~\eqref{eq:RSC:begin} does not exceed the solution of Bellman equation \eqref{eq:rsc:bellmaneq}. Moreover, if $a_1$ in the assumption \eqref{as:rsc:A.4} is bounded from above, we have that the optimal value in~\eqref{eq:RSC:begin} is equal to $\lambda_{\gamma}$ and the optimal strategy is defined by selectors to the Bellman equation \eqref{eq:rsc:bellmaneq}.
\end{proposition}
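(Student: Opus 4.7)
The plan is a verification (submartingale) argument combined with a Hölder step to handle the unboundedness of $u_\gamma$. From the Bellman equation \eqref{eq:rsc:bellmaneq2}, for every $H\in\cA$ and $t\in\bT$ we have $\ln\bE[e^{\gamma F(X_t,H_t,W_t)+u_\gamma(X_{t+1})}\mid\cF_t]\geq u_\gamma(X_t)+\gamma\lambda_\gamma$, so the process
\[
Y_t := \exp\Bigl(\gamma\sum_{i=0}^{t-1}F(X_i,H_i,W_i)+u_\gamma(X_t)-u_\gamma(X_0)-t\gamma\lambda_\gamma\Bigr)
\]
is a submartingale with $Y_0=1$. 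Taking expectations yields
\[
\bE\bigl[(V^H_t)^\gamma e^{u_\gamma(X_t)}\bigr]\;\geq\; V_0^\gamma\, e^{u_\gamma(X_0)+t\gamma\lambda_\gamma}. \qquad(\star)
\]

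To eliminate the nuisance factor $e^{u_\gamma(X_t)}$ on the left, I would apply Hölder with exponents $p>1$ and $q=p/(p-1)$, obtaining $\bE[(V^H_t)^\gamma e^{u_\gamma(X_t)}]\leq \bE[(V^H_t)^{\gamma p}]^{1/p}\bE[e^{qu_\gamma(X_t)}]^{1/q}$. Iterating \eqref{eq:assumpt.rsc.G} gives $\omega(X_t)\leq b_1^t\omega(X_0)+\sum_{i=0}^{t-1}b_1^i a_1(W_{t-1-i})$; together with $|u_\gamma|\leq M(1+\omega)$, the i.i.d.\ structure of $(W_t)$, and the exponential moments from \eqref{eq:assumpt.kpm}, a routine product estimate (the same type used in the proof of Theorem~\ref{th:RSC.contraction}) shows $\bE[e^{qu_\gamma(X_t)}]\leq C_{p,\gamma}<\infty$ uniformly in $t$. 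Plugging into $(\star)$, taking logarithms, dividing by $t\gamma p<0$ (which flips the sign) and passing to $\liminf$ then delivers $\varphi^{\gamma p}(V^H)\leq\lambda_\gamma$ for every $p>1$.

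The main obstacle is turning this into a bound on $\varphi^{\gamma_*}$ at the target parameter $\gamma_*\in(\gamma_0,0)$: Hölder only controls $\varphi^{\gamma p}$ with $\gamma p<\gamma$, and naively letting $p\downarrow 1$ is blocked because the conjugate $q$ would blow up the moment bound $C_{p,\gamma}$. The trick is to apply the argument at a \emph{less negative} parameter: for any $\gamma\in(\gamma_*,0)$, set $p=\gamma_*/\gamma>1$, so that $\gamma p=\gamma_*$ and $\gamma$ still lies in the range where Proposition~\ref{pr:RSC.Bellman.solution} gives a Bellman solution. This yields $\varphi^{\gamma_*}(V^H)\leq\lambda_\gamma$ for every such $\gamma$, and the continuity of $\gamma\mapsto\lambda_\gamma$ from Corollary~\ref{cont} lets me push $\gamma\downarrow\gamma_*$ to conclude $\varphi^{\gamma_*}(V^H)\leq\lambda_{\gamma_*}$. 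Taking $\sup_{H\in\cA}$ finishes the first claim.

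For the moreover-part, boundedness of $a_1$ forces $\omega(X_t)\leq b_1^t\omega(X_0)+\|a_1\|_\infty/(1-b_1)$ uniformly in $t$, so $u_\gamma(X_t)$ is bounded by a deterministic constant. Continuity of $h\mapsto\ln\bE[e^{\gamma F(x,h,W_0)+u_\gamma(G(x,W_0))}]$ on the compact set $U$ (established in Proposition~\ref{pr:RSC.feller}), combined with a Borel selection theorem such as Kuratowski--Ryll-Nardzewski, furnishes a measurable selector $h^*_\gamma\colon\bR^k\to U$ of the infimum in \eqref{eq:rsc:bellmaneq2}. Setting $H^*_t:=h^*_\gamma(X_t)$ promotes the Bellman inequality to an equality, so $Y_t$ becomes a martingale and $(\star)$ holds with equality. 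Since $e^{u_\gamma(X_t)}$ is now squeezed between positive constants, $\tfrac{1}{t\gamma}\ln\bE[(V^{H^*}_t)^\gamma]\to\lambda_\gamma$, whence $\varphi^\gamma(V^{H^*})=\lambda_\gamma$; combined with the first part, this shows that $\sup_{H\in\cA}\varphi^\gamma(V^H)=\lambda_\gamma$ and that it is attained at $H^*$, which is exactly the form asserted in the proposition.
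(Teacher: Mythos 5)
Your proof is correct and follows essentially the same route as the paper's: a verification/tower-property argument giving $(\star)$, H\"older's inequality with conjugate exponents to decouple the unbounded boundary term $u_\gamma(X_t)$ (controlled uniformly via iterating \eqref{eq:assumpt.rsc.G} and the exponential moments in \eqref{eq:assumpt.kpm}), and the continuity of $\gamma\mapsto\lambda_\gamma$ from Corollary~\ref{cont} to remove the H\"older perturbation in the limit $p\to1$; your version is simply the mirror image of the paper's (you solve the Bellman equation at $\gamma$ and bound $\varphi^{\gamma p}$ with $\gamma p=\gamma_*$, whereas the paper solves it at $\gamma/p$ and bounds $\varphi^{\gamma}$ directly). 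The ``moreover'' part likewise matches the paper's argument, with the minor bonus that you make the measurable-selection step explicit.
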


\begin{proof}
This proof could be considered as a variation of the classical verification theorem from the theory of Risk Sensitive Control (see e.g. \cite[Theorem~2.1]{HerMar1996}).
Let $\gamma_0$ be given by \eqref{eq:gamma0} and for $\gamma\in (\gamma_0,0)$, let $u_{\gamma}$ and $\lambda_{\gamma}$ denote the solutions of Bellman equation \eqref{eq:rsc:bellmaneq2}.

First, we need to show that $\lambda_{\gamma}$ is an upper bound for any $\gamma\in (\gamma_0,0)$, i.e. that for any adapted strategy $H=(H_t)_{t\in\bT}$, we get
\begin{equation}\label{inequality}
\lambda_\gamma\geq\liminf_{t\to\infty}\frac{1}{t}\mu^{\gamma}\left(\sum_{i=0}^{t-1}F(X_t,H_t,W_t)\right).
\end{equation}
For $i\in\bT$ and $p>1$, such that $\gamma> p\gamma_0$, using \eqref{eq:rsc:bellmaneq2}, we have
\[
e^{u_{\gamma\over p}(X_{i})} \leq \bE[e^{u_{\gamma\over p}(X_{i+1})+ {\gamma\over p} F(X_{i},H_{i},W_{i})-\lambda_{\gamma\over p} {\gamma\over p}}|\cF_{i}].
\]
Consequently, using the tower property, we get
\[
e^{t\lambda_{\gamma\over p} {\gamma\over p}} \leq \bE[e^{u_{\gamma\over p}(X_{t})-u_{\gamma\over p}(X_0)+ {\gamma\over p} \sum_{i=0}^{t-1}F(X_i,H_i,W_i)}]
\]
for any $t\in\bT$. Equivalently, for $v_{\gamma}(x)=\frac{u_{\gamma}(x)}{\gamma}$, we get
\[
\lambda_{\gamma\over p} \geq \frac{1}{t}\mu^{\gamma\over p}\left(\sum_{i=0}^{t-1}F(X_i,H_i,W_i)+v_{\gamma \over p}(X_{t})-v_{\gamma \over p}(X_0)\right).
\]
It is hard to get rid of $v$ taking the limit, in the above inequality (note, for the case of bounded $v$ it is straightforward). Using Holder's inequality we know that for  $q=p/(p-1)$ we get
\[
\lambda_{\gamma \over p} \geq \frac{1}{t}\left[\mu^{\gamma}\left(\sum_{i=0}^{t-1}F(X_i,H_i,W_i)\right)+\mu^{q\gamma}\left(v_{\gamma \over p}(X_{t})-v_{\gamma \over p}(X_0)\right)\right]
\]
and consequently (for any $p>1$), since $v_{\gamma \over p}(X_{t})-v_{\gamma \over p}(X_0)\leq M (2+\omega(X_{t})+\omega(X_0))$ and $\lim_{t \to \infty} \frac{1}{t}\mu^{q\gamma}(\omega(X_t))=0$ we have
\[
\lambda_{\gamma\over p} \geq \liminf_{t\to\infty}  \frac{1}{t}\mu^{\gamma}\left(\sum_{i=0}^{t-1}F(X_i,H_i,W_i)\right).
\]
By continuity of $\gamma \to \lambda_\gamma$ (see Corollary \ref{cont} ), we have that $\lim_{p\to 1} \lambda_{\gamma \over p}=\lambda_\gamma$, which shows \eqref{inequality}.

Second, we show the optimality of the strategy defined by the Bellman equation \eqref{eq:rsc:bellmaneq}, when $a_1$ in \eqref{as:rsc:A.4} is bounded from above by $\tilde{a}$. Let us fix $\gamma\in (\gamma_0,0)$ and let $M>0$ be such that $\|v_\gamma\|_{\omega}\leq M$. For the strategy $\hat{H}$ determined by the Bellman equation \eqref{eq:rsc:bellmaneq2}, using monotonicity of $\mu_{\gamma}$, we get
\begin{align*}
\lambda_{\gamma} & =\frac{1}{t}\mu^{\gamma}\left(\sum_{i=0}^{t-1}F(X_i,\hat{H}_i,W_i)+v_{\gamma}(X_{t})-v_{\gamma}(X_0)\right)\\
& \leq \frac{1}{t}\mu^{\gamma}\left(\sum_{i=0}^{t-1}F(X_i,\hat{H}_i,W_i)+M(\omega(X_t)+1)-v_{\gamma}(X_0)\right)\\
& \leq \frac{1}{t}\mu^{\gamma}\left(\sum_{i=0}^{t-1}F(X_i,\hat{H}_i,W_i)+M\left(\sum_{i=1}^t b_1^{i-1}a_1(W_{t-i})+b_1^t\omega(X_0)+1\right)-v_{\gamma}(X_0)\right)\\
& \leq \frac{1}{t}\mu^{\gamma}\left(\sum_{i=0}^{t-1}F(X_i,\hat{H}_i,W_i)\right)+\frac{M\left(\frac{\tilde{a}}{1-b_1}+\omega(X_0)+1\right)-v_{\gamma}(X_0)}{t}.
\end{align*}
Letting $t\to \infty$ we obtain (taking into account \eqref{inequality})
\[\lambda_{\gamma}= \liminf_{t\to\infty}  \frac{1}{t}\mu^{\gamma}(\sum_{i=0}^{t-1}F(X_i,H_i,W_i)),\]
which completes the second part of the proof.
\end{proof}

\section{Exemplary dynamics}\label{S:RSC.Examples}
In this subsection let us present examples of dynamics for which assumptions  \eqref{as:rsc:A.1}--\eqref{as:rsc:A.5} are fulfilled.

\begin{example}\label{ex:rsc1} In this example, we shall set $\omega \equiv 0$ (equivalently, one might say that $\omega$ is bounded) and show
that our framework covers a wide class of dynamics in the classical case. The first example is taken from~\cite{Ste1999}. We will assume that
time $\bT=\bR_{+}$ is continuous, but we can only reshape our portfolio in
discrete time moments $n\in\bN$. For $n\in\bN$ and ($z=1,\ldots,k+m$), let us assume that $W^{z}_{n}$ denotes the trajectory of
$w_{z}(t)-w_{z}(n)$
($n\leq t\leq n+1$), where $\{w_{z}(t)\}_{z=1}^{k+m}$ are independent Brownian motions (which generate the filtration). Let us assume that the
dynamics of the risky assets and factors is given by
\begin{align*}
X_{n}^{j} & = b_{j}(X_{n-1})+\sum_{z=1}^{k+m}\delta_{jz}[w_{z}(n)-w_{z}(n-1)], & n\in\bN,\\
\frac{dS^{i}_{t}}{S^{i}_{t}} & =a_{i}(X_{n})\d t+\sum_{z=1}^{k+m}\sigma_{iz}\d w_{z}(t), & t\in [n,n+1),
\end{align*}
where for ($i=1,\ldots,m$), ($j=1,\ldots, k$) and ($z=1,\ldots, k+m$):  $a_i,b_i:\bR^{k}\to\bR$ are measurable and bounded functions, $b_i$ is
continuous, $\delta_{jz}\in\bR$, $\sigma_{iz}\in\bR$ and $\textrm{rank}((\sigma_{iz})_{z=1,\ldots,k+m})=k$. Let $h_{i}(t)$ denote the part of
the
capital invested at time $t$ in the $i$-th risky asset and let
\[U=\{(h_1,\ldots,h_m)\in [0,1]^{m}:\ \sum_{i=1}^{m}h_i=1\}.\]
Moreover, let $H^{i}_{n}=h_{i}(n)$. Using Ito's Lemma (see~\cite{Ste1999} for details) we get function $F$ of the form
\begin{align*}
F(X_n,H_n,W_n) & =\phantom{+} \sum_{i=1}^{m}\int_{n}^{n+1}a_i(X_n)h_i(s)\d
s-\frac{1}{2}\sum_{z=1}^{k+m}\int_{n}^{n+1}\Big(\sum_{i=1}^{m}h_{i}(s)\sigma_{iz}\Big)^{2}\d s\\
&\phantom{=} +\int_{n}^{n+1}\sum_{i=1}^{m}h_{i}(s)\sum_{z=1}^{k+m}\sigma_{iz}\d w_{z}(s).
\end{align*}
One can check that assumptions \eqref{as:rsc:A.1}--\eqref{as:rsc:A.4} will hold in this framework, for $\omega \equiv 0$. See \cite{Ste1999},
where in fact equivalents of
all Propositions from Section~\ref{S:RSC.Bellman} are directly proved. For clarity, let us show the existence of the upper bound in
\eqref{as:rsc:A.4}, for function $F$. We get
\begin{align*}
F(X_{n},H_{n},W_{n}) & =\ln\frac{V_{n+1}}{V_{n}}=\ln\sum_{i=1}^{m}H^{i}_{n}\frac{S^{i}_{n+1}}{S^{i}_{n}}=\ln  \sum_{i=1}^{m} H_n^i
e^{a_i(X_n)+\sum_{z=1}^{k+m} \sigma_{iz}[w_z(n+1)-w_z(n)]}\\
& \leq  \sup_{1\leq i\leq m}\big(a_i(X_n)+\sum_{z=1}^{k+m} \sigma_{iz}[w_z(n+1)-w_z(n)]\big)\\
& \leq \| a\|_{\textrm{sup}}+\|\sigma\|_{\textrm{sup}}\max_{1\leq z\leq k+m}[w_z(n+1)-w_z(n)],
\end{align*}
where $\|a\|_{\sup}=\sup_{1\leq i\leq m}\sup_{x\in\bR^{k}}|a_{i}(x)|$ and $\|\sigma\|_{\sup}=\sup_{1\leq i\leq m}\sup_{1\leq z\leq
k+m}|\sigma_{iz}|$.

Thus, is is sufficient to set any $b_{2}\geq 0$ and
\[
a_{2}(w)= \| a\|_{\textrm{sup}}+\|\sigma\|_{\textrm{sup}}\max_{1\leq z\leq k+m}|w_z(n+1)-w_z(n)|(w).
\]
Note, it is easy to check that $a_2$ will satisfy \eqref{eq:assumpt.kpm}, as for a Gaussian $X$, we get $e^{|X|}\in
L^1$. Moreover \eqref{eq:assumpt.rsc.G} follows from boundedness of $b$ while \eqref{eq:rsc:unerg} from nondegeneracy of $\sigma$ and boundedness of $b$ and in fact one can find a constant $c$ uniform for all $x\in \bR^{k}$.
In this example a solution to the Bellman equation \eqref{eq:rsc:bellmaneq} is bounded and therefore we obtain in Proposition \ref{pr:RSC.Bellman.solution2} that $\lambda_\gamma$ is the optimal value without additional assumptions.
\end{example}
\begin{example} We shall now generalize previous example. Namely, let
\[G(x,W)=B(x)+C(W),\]
where $B:\bR^{k} \to \bR^{k}$ is such that $\|B(x)\|\leq A+b_1\|x\|$ with $b_1<1$ and $C: \bR^{k+m}\to \bR^{k}$ is bounded from above of the form
\[
C(W_n)=\min\left\{\sum_{z=1}^{k+m}\delta_{jz}[w_{z}(n)-w_{z}(n-1)]\, ,\, K\right\},
\]
with $K>0$. Then
\begin{align*}
X_n&=B(X_{n-1})+C(W_n), & \\
\frac{dS^{i}_{t}}{S^{i}_{t}} & =a_{i}(X_{n})\d t+\sum_{z=1}^{k+m}\sigma_{iz}\d w_{z}(t), & t\in [n,n+1),
\end{align*}
where we assume that $\|a_i\|_{\omega}<\infty$. Choosing $\omega(x)=a+b_1\|x\|$ one can check that all assumptions \eqref{as:rsc:A.1}--\eqref{as:rsc:A.5} together with boundedness from above of $a_1$ in \eqref{as:rsc:A.4} are satisfied. In particular, assumption \eqref{as:rsc:A.5} is satisfied uniformly in $x\in \bR^k$ from compact sets due to the form of $G(x,W)$ and $C(W_n)$.
\end{example}

\begin{example} Let us assume that assumptions \eqref{as:rsc:A.1} and \eqref{as:rsc:A.2} hold and the dynamics of $i$-th risky assets is given by
\[
\frac{S^{i}_{t+1}}{S^{i}_{t}}=\xi_{i}(X_t,W_t),
\]
for any $t\in\bT$, where $\xi_{i}$ is a measurable vector function. Moreover the set $U$ will be of the form
$\{(h_1,\ldots,h_m)\in [0,1]^{m}:\ \sum_{i=1}^{m}h_i\leq1\}$.
Then we can define $F$ explicitly, as
\[
F(X_n,H_n,W_n)=\ln\left(\sum_{i=1}^{m}H^{i}_{n}\xi_{i}(X_{n},W_n)+(1-\sum_{i=1}^{m}H^{i}_{n})\right).
\]
To get assumptions \eqref{as:rsc:A.3} and \eqref{as:rsc:A.4} we need to impose additional assumptions on $W$ and $\xi_{i}$. In particular we can
consider the discretized version of Example~\ref{ex:rsc1} by setting $W_n^{i}=w_{i}(n+1)-w_{i}(n)$ and
\begin{equation}\label{eq:rsc:ex2}
\xi^{i}(X_n,W_n)=\exp \Big\{a_{i}(X_n)-\frac{1}{2}\sum_{z=1}^{k+m}\sigma^{2}_{iz}+\sum_{z=1}^{k+m}\sigma_{iz}W^{j}_{n}\Big\}.\end{equation}
See \cite{Ste2004b} for details in general case and \cite{DiMSte2006b} for the case when \eqref{eq:rsc:ex2} holds.

\end{example}

{\small
\bibliographystyle{amsplain}
\providecommand{\bysame}{\leavevmode\hbox to3em{\hrulefill}\thinspace}
\providecommand{\MR}{\relax\ifhmode\unskip\space\fi MR }
\providecommand{\MRhref}[2]{%
  \href{http://www.ams.org/mathscinet-getitem?mr=#1}{#2}
}
\providecommand{\href}[2]{#2}

}

 \end{document}